\date{}
\renewcommand{\uppercasenonmath}[1]{}
\numberwithin{equation}{section} \theoremstyle{plain}
\theoremstyle{plain}
\theoremstyle{plain}
\newtheorem{theorem}{Theorem}[section]
\newtheorem{proposition}[theorem]{Proposition}
\newtheorem{lemma}[theorem]{Lemma}
\newtheorem{corollary}[theorem]{Corollary}
\newtheorem{example}[theorem]{Example}
\newtheorem*{open question}{Open Question}
\newtheorem{definition}[theorem]{Definition}
\theoremstyle{definition}
\newtheorem*{acknowledgement}{Acknowledgement}
\theoremstyle{remark}
\newtheorem{remark}[theorem]{Remark}
\newcommand{\A}{\mathcal{A}}
\newcommand{\N}{\mathcal{N}}
\newcommand{\C}{\mathcal{C}}
\newcommand{\Id}{\mathrm{Id}}
\def\p{\frak p}
\def\m{\frak m}
\def\Hom{{\rm Hom}}
\def\Ker{{\rm Ker}}
\def\Im{{\rm Im}}
\def\Coker{{\rm Coker}}
\def\Jac{{\rm Jac}}
\def\Max{{\rm Max}}
\def\Spec{{\rm Spec}}
\def\Max{{\rm Max}}
\begin{document}
\begin{center}
{\large  \bf Characterizing  $S$-Artinianness by uniformity}

\vspace{0.5cm}
Xiaolei Zhang,\ \ \ Wei Qi\\
\bigskip
School of Mathematics and Statistics, Shandong University of Technology,
Zibo 255049, China\\

Corresponding author: Xiaolei Zhang, E-mail: zxlrghj@163.com\\
\end{center}

\bigskip
\centerline { \bf  Abstract}
\bigskip
\leftskip10truemm \rightskip10truemm \noindent
Let $R$ be a commutative ring with identity and $S$ a multiplicative subset of $R$. An $R$-module $M$ is said to be a $u$-$S$-Artinian  module if there is $s\in S$ such that any descending chain of submodules of $M$ is $S$-stationary with respect to $s$.  The notion of $u$-$S$-Artinian modules are characterized in terms of ($S$-MIN)-conditions and $u$-$S$-cofinite properties. We call a ring $R$ is a $u$-$S$-Artinian ring if $R$ itself is a $u$-$S$-Artinian module, and then show that any $u$-$S$-semisimple ring is $u$-$S$-Artinian. It is proved that a ring  $R$ is  $u$-$S$-Artinian if and only if  $R$ is $u$-$S$-Noetherian, the $u$-$S$-Jacobson radical $\Jac_S(R)$ of $R$ is  $S$-nilpotent and $R/\Jac_S(R)$ is a  $u$-$S/\Jac_S(R)$-semisimple ring. Besides, some examples are given to distinguish Artinian rings, $u$-$S$-Artinian rings and $S$-Artinian rings.
\\
\vbox to 0.3cm{}\\
{\it Key Words:} $u$-$S$-Artinian ring; $u$-$S$-Artinian module; $u$-$S$-Noetherian ring; $u$-$S$-semisimple ring; $u$-$S$-Jacobson radical.\\
{\it 2020 Mathematics Subject Classification:} 13E10, 13C12.

\leftskip0truemm \rightskip0truemm
\bigskip
\section{Introduction}
Throughout this article, all rings are commutative rings with identity and all modules are unitary.  A subset $S$ of $R$ is called a multiplicative subset of $R$ if $1\in S$ and $s_1s_2\in S$ for any $s_1\in S$, $s_2\in S$. Early in 2002,  Anderson and Dumitrescu \cite{ad02}  introduced the so-called $S$-Noetherian ring $R$, in which for any ideal $I$ of $R$, there exists a finitely generated ideal $K$ of $R$  such that $sI\subseteq K\subseteq I$ for some $s\in S$. Note that Cohen's Theorem, Eakin-Nagata Theorem and Hilbert Basis Theorem for $S$-Noetherian rings are also given in \cite{ad02}. The notion of $S$-Noetherian rings provides a good direction for  $S$-generalizations of other classical rings (see \cite{Ah18,ahz19,B19,bh18,kkl14} for example).  However, it is often difficult to study these  $S$-generalizations of classical rings via a module-theoretic approach. The essential difficulty is that the selected element $s\in S$ is often not ``uniform'' in their definitions. To overcome this difficulty for Noetherian properties, Qi et al. \cite{QKWCZ21} recently introduced the notions of uniformly $S$-Noetherian rings which are $S$-Noetherian rings such that $s$ is independent on $I$ in the definition of $S$-Noetherian rings. They also introduced the notion of $u$-$S$-injective modules and then characterized uniformly $S$-Noetherian rings in terms of $u$-$S$-injective modules. Some other uniform $S$-versions of rings and modules, such as semisimple rings, von Neumann regular rings, projective modules and flat modules  are introduced and studied by the named authors and  coauthors in \cite{zwz21,zwz21-p}.

In 2020, Sengelen et al. \cite{stk20} introduced the notions of $S$-Artinian rings for which  any descending chain of ideals $I_1\supseteq I_2\supseteq \cdots\supseteq I_m\supseteq \cdots$ of $R$ satisfies $S$-stationary condition, i.e., then there exist $s\in S$ and $k \in  \mathbb{Z}^{+}$ such that $sI_k\subseteq I_n$ for all $n \geq  k$. In the definition of $S$-Artinian rings, it is easy to see that although the element $s\in S$ is independent on $n$ but it is certainly dependent on the given descending chain of ideals. Recently,  \"{O}zen et al.\cite{ONTK21} extended the notion of $S$-Artinian rings  to that  of $S$-Artinian modules by replacing descending chains of ideals to these of  submodules. And then, Omid et al. \cite{OA21} introduced the notion of weakly $S$-Artinian modules, for which every descending chain $N_1\supseteq N_2\supseteq \cdots\supseteq N_m\supseteq \cdots$  of submodules of $M$ is weakly $S$-stationary, i.e., there exists $k \in  \mathbb{Z}^{+}$ such that for each $n \geq  k$, $s_nN_k\subseteq N_n$  for some $s_n \in S$.  In the definition of weakly $S$-Artinian modules, it is easy to see the element $s_n$ is dependent both on  $n$ and the descending chain of ideals. So  the notion of weakly $S$-Artinian modules is certainly  a ``weak'' version of that of  $S$-Artinian modules. In this article,  we introduced and study the ``uniform'' $S$-version of Artinian rings  and modules (we call them $u$-$S$-Artinian rings  and modules) such that the element $s$ given in the definition of $S$-Artinian rings  and modules is both independent  on $n$ and the descending chain of ideals or submodules.  Obviously, we have the following implications:
$${\boxed{\mbox{Artinian rings}}}\Longrightarrow {\boxed{$u$\mbox{-}$S$\mbox{-Artinian rings}}}\Longrightarrow {\boxed{$S$\mbox{-Artinian rings}}}$$
But neither of implications can be reversed (see Example \ref{exam-not-ut-1} and  Example  \ref{exam-not-ut-2}). Denote by $\Jac_S(R)$ the $u$-$S$-Jacobson radical of a ring $R$ (see Definition \ref{usjaco}). Then
it is also worth to mention that  a ring  $R$ is  $u$-$S$-Artinian if and only if  $R$ is $u$-$S$-Noetherian, the $u$-$S$-Jacobson radical $\Jac_S(R)$ of $R$ is  $S$-nilpotent and $R/\Jac_S(R)$ is a  $u$-$S/\Jac_S(R)$-semisimple ring (see Theorem \ref{s-artinian-s-Noetherian}).

The related notions of  uniformly $S$-torsion theory  originally emerged in \cite{zwz21}, and we give a quick review below. An $R$-module $T$ is called  $u$-$S$-torsion (with respect to $s$) provided that there exists  $s\in S$ such that $sT=0$. 
A sequence $0\rightarrow A\xrightarrow{f} B\xrightarrow{g} C\rightarrow 0$ of $R$-modules is called a short $u$-$S$-exact sequence (with respect to $s$), if $s\Ker(f)=s\Coker(f)=0$, $s\Ker(g)\subseteq \Im(f)$ and $s\Im(f)\subseteq \Ker(g)$ for some $s\in S$. An $R$-homomorphism $f:M\rightarrow N$ is a $u$-$S$-monomorphism $($resp.,  $u$-$S$-epimorphism, $u$-$S$-isomorphism$)$  (with respect to $s$) provided $\Ker(f)$ is  $($resp., $\Coker(f)$ is,  both $\Ker(f)$ and  $\Coker(f)$ are$)$  $u$-$S$-torsion (with respect to $s$). Recall from \cite{zwz21-p} an $R$-module $P$ is called $u$-$S$-projective provided that the induced sequence $$0\rightarrow \Hom_R(P,A)\rightarrow \Hom_R(P,B)\rightarrow \Hom_R(P,C)\rightarrow 0$$ is $u$-$S$-exact for any $u$-$S$-exact sequence $0\rightarrow A\rightarrow B\rightarrow C\rightarrow 0$. Suppose $M$ and $N$ are $R$-modules. We say $M$ is $u$-$S$-isomorphic to $N$ if there exists a $u$-$S$-isomorphism $f:M\rightarrow N$. A family $\C$  of $R$-modules  is said to be closed under $u$-$S$-isomorphisms if $M$ is $u$-$S$-isomorphic to $N$ and $M$ is in $\C$, then $N$ is  also in  $\C$. Note that the class of $u$-$S$-projective modules is closed under $u$-$S$-isomorphisms.  One can deduce from the following \cite[Lemma 2.1]{zwz21-p} that the existence of $u$-$S$-isomorphisms of two $R$-modules is actually an equivalence relationship.

\section{uniformly  $S$-Artinian  modules}
Let $R$ be a ring, $S$ a multiplicative subset of $R$ and $M$ an $R$-module.  Suppose $M_1\supseteq M_2\supseteq \cdots\supseteq M_n\supseteq \cdots$ is a descending chain of submodules of $M$. The family $\{M_i\}_{i\in \mathbb{Z}^{+}}$ is said to be $S$-stationary (with respect to $s$) if there exists $s\in S$ and $k\in \mathbb{Z}^{+}$ such that  $sM_k\subseteq M_n$ for every $n\geq k$. And $M$ is called an $S$-Artinian module if each descending chain of submodules $\{M_i\}_{i\in \mathbb{Z}^{+}}$ of $M$ is $S$-stationary (see \cite[definition 1]{ONTK21}). Note that  in the definition of $S$-Artinian module, the element $s$ is dependent on the given descending chain of submodules. The main purpose of this section is to introduce and study a ``uniform'' version of $S$-Artinian modules.

\begin{definition} Let $R$ be a ring and $S$ a multiplicative subset of $R$.  An $R$-module $M$ is called a $u$-$S$-Artinian $($abbreviates uniformly $S$-Artinian$)$ module $($with respect to $s)$ provided that there exists $s\in S$ such that each descending chain  $\{M_i\}_{i\in \mathbb{Z}^{+}}$ of submodules of $M$ is $S$-stationary with respect to $s$.
\end{definition}

Trivially, if $0\in S$, then every $R$-module is $u$-$S$-Artinian.
If $S_1\subseteq S_2$ are  multiplicative subsets of $R$ and $M$ is $u$-$S_1$-Artinian, then  $M$ is obviously $u$-$S_2$-Artinian.  Note that Artinian modules are exactly $u$-$\{1\}$-Artinian modules. So all Artinian modules are $u$-$S$-Artinian modules for any  multiplicative set $S$. Next we give a $u$-$S$-Artinian module which is not Artinian.
\begin{example}  Let $R=\mathbb{Z}$ be the ring of integers, $p$ a prime in $R$ and $M=\mathbb{Z}_p[[x]]$ the set of all formal power series over $\mathbb{Z}_p:=\mathbb{Z}/p\mathbb{Z}$. Set $S=\{p^n\mid n\in\mathbb{N} \}$. Then $M$ is not Artinian since the descending chain $$\langle x\rangle \supseteq \langle x^2\rangle \supseteq\cdots \supseteq \langle x^n\rangle \supseteq\cdots  $$ is not stationary. However, since $M$ is obviously $u$-$S$-torsion,  $M$ is a $u$-$S$-Artinian module.
\end{example}

Let $S$ be a multiplicative subset of $R$. We always denote by $S^{\ast}=\{r\in R\mid rt\in S$ for some $t\in R\}$ and call it to be the saturation of $S$. A multiplicative set  $S$ is said to be  saturated if $S=S^{\ast}$.
Trivially, we have $S\subseteq S^{\ast}$ for all multiplicative subset $S$ of $R$.

\begin{proposition}\label{s-loc} Let $R$ be a ring, $S$ a multiplicative subset of $R$ and $M$ an $R$-module. Let $S^{\ast}$ be the saturation of $S$. Then  $M$ is a $u$-$S$-Artinian $R$-module if and only if $M$ is a $u$-$S^{\ast}$-Artinian $R$-module.
\end{proposition}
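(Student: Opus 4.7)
The plan is to prove the two implications separately, noting that one direction is essentially free. Since $S \subseteq S^{\ast}$, any uniform witness $s \in S$ for $u$-$S$-Artinianness of $M$ is also an element of $S^{\ast}$, and the very same descending-chain condition $sM_k \subseteq M_n$ for $n \geq k$ then certifies $u$-$S^{\ast}$-Artinianness with the same witness. So the forward direction is immediate.

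For the converse, suppose $M$ is $u$-$S^{\ast}$-Artinian with uniform witness $s^{\ast} \in S^{\ast}$. The goal is to produce a single element of $S$ (not merely of $S^{\ast}$) that works for every descending chain. By the definition of the saturation, there exists $t \in R$ such that $s := s^{\ast} t \in S$. I expect this $s$ to serve as the desired uniform witness. Indeed, given any descending chain $\{M_i\}_{i \in \mathbb{Z}^{+}}$ of submodules of $M$, the hypothesis supplies an index $k$ with $s^{\ast} M_k \subseteq M_n$ for all $n \geq k$; multiplying by $t$ and using that each $M_n$ is a submodule (hence closed under multiplication by $t$) yields
\[
  sM_k \;=\; t(s^{\ast} M_k) \;\subseteq\; tM_n \;\subseteq\; M_n,
\]
which is exactly the required $S$-stationarity with respect to $s$.

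There is no serious obstacle here: the argument rests entirely on the facts that submodules absorb arbitrary ring scalars and that saturation is defined precisely to make the multiplier $t$ available. The only point worth emphasising is that the \emph{same} $s$ must work across all chains, but this is automatic because $t$ depends only on the already-uniform $s^{\ast}$ and not on the chain. Consequently the proof is short and symmetric, and the result will be used tacitly in later sections to pass freely between $S$ and its saturation.
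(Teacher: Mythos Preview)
Your proof is correct and follows essentially the same approach as the paper: both directions are handled identically, with the converse established by choosing $t\in R$ so that $s^{\ast}t\in S$ and observing that $s^{\ast}tM_k\subseteq M_n$. Your explicit remark that $t$ depends only on $s^{\ast}$ (and hence the new witness is uniform across all chains) is a welcome clarification that the paper leaves implicit.
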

\begin{proof} Suppose $M$ is a $u$-$S$-Artinian $R$-module. Then  $M$ is trivially a $u$-$S^{\ast}$-Artinian $R$-module since $S\subseteq S^{\ast}$. Now, suppose $M$ is a $u$-$S^{\ast}$-Artinian $R$-module. Then there is $r\in S^{\ast}$ such that each descending chain of submodules $\{M_i\}_{i\in \mathbb{Z}^{+}}$ of $M$ is $S^{\ast}$-stationary with respect to $r$, i.e., there exits $k\in \mathbb{Z}^{+}$ such that  $rM_k\subseteq M_n$ for each $n\geq k$. Since $r\in S^{\ast}$,  $rt\in S$ for some $t\in R$. Note that $rtM_k\subseteq rM_k\subseteq M_n$. Hence $M$ is a $u$-$S$-Artinian $R$-module.
\end{proof}

Let $R$ be a ring, $M$ an $R$-module and  $S$ a multiplicative subset of $R$.  For any $s\in S$, there is a  multiplicative subset $S_s=\{1,s,s^2,....\}$ of $S$. We denote by $M_s$ the localization of $M$ at $S_s$. Note that $M_s\cong M\otimes_RR_s$.

\begin{lemma}\label{s-loc} Let $R$ be a ring, $S$ a multiplicative subset of $R$ and $M$ an $R$-module. If $M$ is a $u$-$S$-Artinian $R$-module, then there exists an element $s\in S$ such that $M_{s}$ is an Artinian $R_s$-module.
\end{lemma}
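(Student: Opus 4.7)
The plan is to take the same element $s\in S$ witnessing the $u$-$S$-Artinian property of $M$ and show that $M_s$ is Artinian as an $R_s$-module. The mechanism is the standard contraction/extension correspondence for localization, combined with the key observation that once we localize at $s$, the factor $s$ appearing in every $S$-stationary condition becomes a unit, which upgrades ``$S$-stationary with respect to $s$'' into ordinary stationarity.

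More concretely, I would start with an arbitrary descending chain $N_1 \supseteq N_2 \supseteq \cdots$ of $R_s$-submodules of $M_s$. For each $i$ define the contraction
\[
M_i := \{m \in M \mid m/1 \in N_i\},
\]
which is an $R$-submodule of $M$, and observe that $M_1 \supseteq M_2 \supseteq \cdots$ is a descending chain in $M$ with $(M_i)_s = N_i$ (this is the usual extension-contraction relationship for modules over a localization). By hypothesis, there exists $k \in \mathbb{Z}^+$ such that $sM_k \subseteq M_n$ for all $n \geq k$.

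Now localize at $s$. Since $s/1$ is a unit in $R_s$, we have $(sM_k)_s = s(M_k)_s = (M_k)_s = N_k$, and similarly $(M_n)_s = N_n$. The inclusion $sM_k \subseteq M_n$ therefore yields $N_k \subseteq N_n$ after localization, and combined with $N_n \subseteq N_k$ from the original chain we conclude $N_n = N_k$ for all $n \geq k$. Hence every descending chain of $R_s$-submodules of $M_s$ stabilizes, so $M_s$ is an Artinian $R_s$-module, as required.

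I do not expect any serious obstacle here; the argument is essentially bookkeeping about how contraction behaves with respect to descending chains and the trivial fact that $s$ becomes invertible in $R_s$. The only point worth double-checking is that the contraction really does recover $N_i$ upon re-localization, i.e., $(M_i)_s = N_i$, which follows because $N_i$ is already an $R_s$-submodule of $M_s$ and therefore equals the extension of its own contraction.
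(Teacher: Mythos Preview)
Your proof is correct and follows essentially the same route as the paper: contract the descending chain of $R_s$-submodules along the canonical map $M\to M_s$, apply the $u$-$S$-Artinian hypothesis to the resulting chain in $M$, and then use that $s$ is a unit in $R_s$ together with the extension--contraction identity $(M_i)_s=N_i$ to recover ordinary stabilization in $M_s$. The only cosmetic difference is that you pass the inclusion $sM_k\subseteq M_n$ through localization, whereas the paper phrases the same step via $sM_k=M_k$ for the $R_s$-module $M_k$ before pulling back; both amount to the observation that multiplying by a unit does nothing.
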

\begin{proof} Let $s$ be an element in $S$ such that each family of descending chain of submodules $\{M_i\}_{i\in \mathbb{Z}^{+}}$ of $M$ is $S$-stationary with respect to $s$. Let $M_1\supseteq M_2\supseteq \cdots\supseteq M_n\supseteq \cdots$ be  a descending chain of $R_s$-submodules of $M_{s}$. Considering  the natural homomorphism $f:M\rightarrow M_{s}$, we have a  a descending chain of submodules of $M$ as follows: $$f^{-1}(M_1)\supseteq f^{-1}(M_2)\supseteq \cdots\supseteq f^{-1}(M_n)\supseteq \cdots$$ There is a $k\in \mathbb{Z}^{+}$ such that  $sf^{-1}(M_k)=f^{-1}(sM_k)=f^{-1}(M_k)\subseteq f^{-1}(M_n)$ for each $n\geq k$ since $M_k$ is an $R_s$-module. Hence  $M_k=M_n$ for each $n\geq k$. Consequently, $M_s$ is an Artinian $R_s$-module.
\end{proof}

\begin{remark} The converse of Lemma \ref{s-loc} also does not hold in general. Let $R=k[[x]]$ the formal power series ring over a field $k$. Let $S=\{1,x,x^2,\dots\}$.   Then $R_S$ is a field, and so is an Artinian $R_S$-module. However, $R$ is not a $u$-$S$-Artinian $R$-module as $R$ is not Artinian (see Proposition \ref{s-artinian-regular}).
\end{remark}

A multiplicative subset $S$ of $R$ is said to satisfy the maximal multiple condition if there
exists an $s\in S$ such that $t|s$ for each $t\in S$. Both finite multiplicative subsets and the  multiplicative subsets that consist of units  satisfy the maximal multiple condition.
Certainly, $u$-$S$-Artinian modules are $S$-Artinian modules.  And the following result shows that the converse also holds for multiplicative sets satisfying maximal multiple condition.
\begin{proposition} \label{s-loc-u-noe-fini}
	Let $R$ be a ring,  $S$ a  multiplicative subset of $R$ that satisfying the maximal multiple condition, and $M$ an $R$-module. Then $M$ is a $u$-$S$-Artinian module if and only if $M$ is an  $S$-Artinian module.
\end{proposition}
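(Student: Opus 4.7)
The forward implication is immediate from the definitions: if some fixed $s\in S$ witnesses $S$-stationarity for every descending chain, then each chain is in particular $S$-stationary (with a chain-dependent witness, namely $s$ again). So the content is in the reverse implication.

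For the reverse direction, my plan is to exploit the maximal multiple condition to manufacture a single uniform witness out of the per-chain witnesses. By hypothesis, pick once and for all an element $s^{\ast}\in S$ such that every $t\in S$ divides $s^{\ast}$; that is, for each $t\in S$ we can write $s^{\ast}=tr_t$ with $r_t\in R$. Now let $\{M_i\}_{i\in\mathbb{Z}^+}$ be any descending chain of submodules of $M$. Since $M$ is $S$-Artinian, there exist $t\in S$ and $k\in\mathbb{Z}^+$ (both depending on the chain) with $tM_k\subseteq M_n$ for every $n\geq k$. The key observation is then that
\[
s^{\ast}M_k \;=\; r_t\,t\,M_k \;\subseteq\; r_t\,M_n \;\subseteq\; M_n
\]
for every $n\geq k$, where the last inclusion uses only that $M_n$ is a submodule (hence closed under the $R$-action of $r_t$). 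Thus the chain is $S$-stationary with respect to the fixed element $s^{\ast}$, and since $\{M_i\}$ was arbitrary this shows $M$ is $u$-$S$-Artinian with uniform witness $s^{\ast}$.

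There is no real obstacle here: once the maximal multiple $s^{\ast}$ is fixed, the divisibility relation $t\mid s^{\ast}$ automatically converts any chain-dependent $S$-stationarity datum into one for $s^{\ast}$, and submodule-closure handles the extra factor $r_t$. The only thing worth checking carefully is that one is allowed to choose $s^{\ast}$ before seeing the chain (which is exactly what the maximal multiple condition grants), so that $s^{\ast}$ really is independent of $\{M_i\}$. This is what upgrades $S$-Artinianness to the uniform version.
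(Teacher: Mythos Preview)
Your proof is correct and follows essentially the same approach as the paper: both fix a maximal multiple $s^{\ast}$ in advance and then use $t\mid s^{\ast}$ to convert the chain-dependent inclusion $tM_k\subseteq M_n$ into $s^{\ast}M_k\subseteq M_n$. The only cosmetic difference is that the paper phrases the key step as $s^{\ast}M_k\subseteq tM_k\subseteq M_n$ rather than writing out the factor $r_t$ explicitly.
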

\begin{proof}  If $M$ is a $u$-$S$-Artinian module, then trivially $M$ is $S$-Artinian. Let $s\in S$ such that $t|s$ for each $t\in S$. Suppose $M$ is an $S$-Artinian module and $\{M_i\}_{i\in \mathbb{Z}^{+}}$ a descending chain of submodules of $M$. Then there exists $t\in S$ such that  $tM_k\subseteq M_n$ for each $n\geq k$. So $sM_k\subseteq tM_k\subseteq M_n$ for each $i=1,\dots, n$. Hence $M$ is a $u$-$S$-Artinian module.
\end{proof}

However, the following example shows $S$-Artinian modules are not  $u$-$S$-Artinian modules in general.

\begin{example}\label{exam-not-ut-1} Let $R$ be a valuation domain whose valuation group is $G=\prod\limits_{\aleph}\mathbb{Z}$ the Hahn product of $\aleph$-copies of $\mathbb{Z}$ with lexicographic order, where $\aleph$ is an uncountable regular cardinal $($see \cite{L01}$).$ Let $S=R\setminus\{0\}$ the set of all nonzero elements of $R$. Then $R$ itself is an $S$-Artinian $R$-module but not $u$-$S$-Artinian.
\end{example}
\begin{proof}
	
	First, we claim that  $R$ is an $S$-Artinian $R$-module. Indeed, let  $I_1\supseteq I_2\supseteq \cdots\supseteq I_n\supseteq \cdots$ be a descending chain of ideals of $R$. We may assume that each $I_i$ is not equal to  $0$. Then for each $I_i$, there exists an nonzero element $r_i\in I_i$ such that $v(r_i)\in G$. Moreover, we can assume all $r_i$ satisfy  $v(r_i)\leq v(r_{i+1})$. Since $\aleph$ is an uncountable regular cardinal, $\lim\limits_{\longrightarrow}v(r_i)<\aleph$. So there is an element $x\in G$ such that $x\geq v(r_i)$ for each $i$. Suppose $v(s)=x$. Then $0\not=s\in \bigcap\limits_{i=1}^{\infty} I_i$. Hence $sI_k\subseteq I_n$ for each $n\geq k$. Consequently, $R$ is an $S$-Artinian $R$-module. Now, we claim that $R$ is not a
	$u$-$S$-Artinian $R$-module. On contrary, suppose $R$ is  $u$-$S$-Artinian. Then, by Lemma \ref{s-loc}, there is an $s\in S$ such that  $R_s$ is an artinian domain, which is exactly a field. Let $r$ be a nonzero element such that $v(r)>nv(s)$ for all non-negative integer $n$. Then $r$ is not a unit in $R_s$,  which is a contradiction.  Consequently, $R$ is not a $u$-$S$-Artinian $R$-module. $($Note that it can also be easily deduced by Proposition \ref{s-artinian-regular}.$)$
\end{proof}

\begin{lemma}\label{s-iso} Let $R$ be a ring and $S$ a multiplicative subset of $R$. Let $M$ and $N$ be $R$-modules. If $M$ is $u$-$S$-isomorphic to $N$, then $M$ is $u$-$S$-Artinian if and only if $N$ is  $u$-$S$-Artinian.
\end{lemma}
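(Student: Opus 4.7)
The plan is to prove the forward implication directly and get the reverse implication from the symmetry of the $u$-$S$-isomorphism relation, which the paper has already recorded as an equivalence relation (via the cited \cite[Lemma 2.1]{zwz21-p}). So it suffices to fix a $u$-$S$-isomorphism $f:M\to N$ with respect to some $s\in S$, assume that $M$ is $u$-$S$-Artinian with respect to some $t\in S$, and produce a single element of $S$ that witnesses $u$-$S$-Artinianness of $N$.

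First I would unpack the hypothesis: $s\Ker(f)=0$ and $s\Coker(f)=0$, the second of which means $sN\subseteq \Im(f)$. Then, given an arbitrary descending chain $N_1\supseteq N_2\supseteq\cdots$ of submodules of $N$, I would pull it back to the descending chain $f^{-1}(N_1)\supseteq f^{-1}(N_2)\supseteq\cdots$ of submodules of $M$. Applying $u$-$S$-Artinianness of $M$ with respect to $t$, there exists $k\in\mathbb{Z}^{+}$ such that $tf^{-1}(N_k)\subseteq f^{-1}(N_n)$ for every $n\geq k$.

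The key calculation is then to push this back down to $N$. For any $x\in N_k$, we have $sx\in\Im(f)$, so $sx=f(y)$ for some $y\in M$. Since $f(y)=sx\in N_k$, we get $y\in f^{-1}(N_k)$, hence $ty\in f^{-1}(N_n)$, which gives $stx=tf(y)=f(ty)\in N_n$. Therefore $stN_k\subseteq N_n$ for every $n\geq k$, with the uniform witness $st\in S$ depending only on $f$ and on $M$ (not on the chain). This shows $N$ is $u$-$S$-Artinian with respect to $st$, and combining with the symmetry of $u$-$S$-isomorphism completes the proof.

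I do not expect a real obstacle here; the argument is a routine diagram chase. The only point requiring a bit of care is to remember that a $u$-$S$-isomorphism is required to push submodules back along $f$ (using $sN\subseteq \Im(f)$), and to check that the witness $st$ is a single element of $S$ independent of the chosen chain—otherwise one would only recover ordinary $S$-Artinianness rather than the uniform version.
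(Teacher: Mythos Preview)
Your proof is correct and follows essentially the same route as the paper: pull the chain back along $f^{-1}$, apply the $u$-$S$-Artinian hypothesis on $M$, push forward, and invoke the symmetry of $u$-$S$-isomorphism for the converse. In fact your bookkeeping is cleaner than the paper's, since you track the two witnesses $s$ and $t$ separately and obtain the uniform witness $st$, whereas the paper's line $sN_k=f(sf^{-1}(N_k))$ tacitly uses $sN_k\subseteq \Im(f)$ without making the dependence on the $u$-$S$-isomorphism witness explicit.
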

\begin{proof} Let $M$ be $u$-$S$-Artinian with respect to $s\in S$ and $f:M\rightarrow N$ a $u$-$S$-isomorphism. Suppose $N_1\supseteq N_2\supseteq \cdots\supseteq N_n\supseteq \cdots$ is a descending chain of  submodules of $N$. Then   $f^{-1}(N_1)\supseteq f^{-1}(N_2)\supseteq \cdots\supseteq f^{-1}(N_n)\supseteq \cdots$ is a descending chain of  submodules of $N$. So there is $k\in \mathbb{Z}^+$ such that $sf^{-1}(N_k)\subseteq f^{-1}(N_n)$ for any $n\geq k$. Thus $sN_k=f(sf^{-1}(N_k))\subseteq f(f^{-1}(N_n))=N_n$ for any $n\geq k$. Hence  $M$ is $u$-$S$-Artinian. Suppose $N$ is  $u$-$S$-Artinian. Then by \cite[Lemma 2.1]{zwz21-p}, there is a $u$-$S$-isomorphism $g:N\rightarrow M$. So we can show $M$ is $u$-$S$-Artinian similarly.
\end{proof}

\begin{proposition} \label{s-u-noe-s-exact}
	Let $R$ be a ring and $S$ a multiplicative subset of $R$. Let $0\rightarrow A\rightarrow B\rightarrow C\rightarrow 0$ be an $S$-exact sequence. Then $B$ is $u$-$S$-Artinian if and only if $A$ and $C$ are $u$- $S$-Artinian. Consequently, a finite direct sum $\bigoplus\limits_{i=1}^nM_i$ is $u$-$S$-Artinian if and only if each $M_i$ is $u$-$S$-Artinian $(i=1,\dots,n)$.
\end{proposition}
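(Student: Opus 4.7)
The plan is to fix an element $t \in S$ witnessing the $u$-$S$-exactness of the sequence $0 \to A \xrightarrow{f} B \xrightarrow{g} C \to 0$, so that $t\Ker(f) = 0$, $t\Coker(g) = 0$, $t\Ker(g) \subseteq \Im(f)$, and $t\Im(f) \subseteq \Ker(g)$. The overall strategy is to transport descending chains back and forth between $A$, $B$, and $C$ via $f$ and $g$; in every case the resulting witness in $S$ is just a product of $t$ with the witnesses of $u$-$S$-Artinianness of the source modules.

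For the forward direction, suppose $B$ is $u$-$S$-Artinian with respect to $s \in S$. Given a descending chain $\{A_i\}$ in $A$, I would look at $\{f(A_i)\}$ in $B$ to obtain $k$ with $sf(A_k) \subseteq f(A_n)$ for all $n \geq k$; for $a \in A_k$ one picks $a' \in A_n$ with $f(sa) = f(a')$, and then $sa - a' \in \Ker(f)$ together with $t\Ker(f)=0$ forces $tsa = ta' \in A_n$, so $ts \in S$ witnesses $u$-$S$-Artinianness of $A$. Dually, for a chain $\{C_i\}$ in $C$, I would consider $\{g^{-1}(C_i)\}$ in $B$; for $c \in C_k$ one uses $t\Coker(g)=0$ to write $tc = g(b)$ with $b \in g^{-1}(C_k)$, and then $sb \in g^{-1}(C_n)$ yields $stc = g(sb) \in C_n$, so $ts$ is again a uniform witness.

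The main step, and the real obstacle, is the reverse direction. Assume $A$ is $u$-$S$-Artinian with respect to $s_A$ and $C$ with respect to $s_C$, and let $\{B_i\}$ be a descending chain in $B$. I form the chains $A_i := f^{-1}(B_i) \subseteq A$ and $C_i := g(B_i) \subseteq C$, and pick $k$ large enough that both $s_A A_k \subseteq A_n$ and $s_C C_k \subseteq C_n$ hold for every $n \geq k$. The key construction: for $b \in B_k$, the inclusion $s_C g(b) \in g(B_n)$ produces some $b' \in B_n$ with $s_C b - b' \in \Ker(g)$; the condition $t\Ker(g) \subseteq \Im(f)$ then writes $t(s_C b - b') = f(a)$ for some $a \in A$. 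Since $f(a) = ts_C b - tb' \in B_k$ (using $b' \in B_n \subseteq B_k$), we get $a \in A_k$, hence $s_A a \in A_n$ and $f(s_A a) \in B_n$. Rearranging $f(s_A a) = ts_C s_A b - ts_A b'$ and absorbing $ts_A b' \in B_n$ yields $ts_C s_A b \in B_n$, so $ts_C s_A \in S$ is a uniform witness for $B$. The subtlety to watch is that $a$ must actually land in $A_k = f^{-1}(B_k)$, which is exactly where the stage condition $B_n \subseteq B_k$ is used.

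The direct sum statement then follows by a routine induction on $n$: the genuinely split short exact sequence $0 \to M_1 \to \bigoplus_{i=1}^{n} M_i \to \bigoplus_{i=2}^{n} M_i \to 0$ is $u$-$S$-exact with respect to $1 \in S$, so applying the main equivalence together with the inductive hypothesis on $\bigoplus_{i=2}^{n} M_i$ delivers both directions at once.
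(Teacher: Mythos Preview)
Your proof is correct and follows essentially the same strategy as the paper: transfer a descending chain in $B$ to chains in $A$ and $C$ via $f^{-1}$ and $g$, stabilize both, and combine the witnesses to stabilize the original chain. The only cosmetic difference is that the paper first invokes Lemma~\ref{s-iso} to reduce the $u$-$S$-exact sequence to a genuinely exact one (so that $f$ is an inclusion, $g$ is the quotient by $A$, and the extra factor $t$ disappears), whereas you carry the $u$-$S$-exactness witness $t$ through the computation directly; the resulting element-chase is identical, with your $ts_As_C$ playing the role of the paper's $s_1s_2$.
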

\begin{proof}  We can assume that  $0\rightarrow A\rightarrow B\rightarrow C\rightarrow 0$ is an exact sequence by Lemma \ref{s-iso}.  If $B$ is $u$-$S$-Artinian, then it is easy to verify $A$ and $C$ are $u$-$S$-Artinian. Now suppose $A$ and $C$ are $u$-$S$-Artinian. Let $B_1\supseteq B_2\supseteq \cdots\supseteq B_n\supseteq \cdots$ be a descending chain of  submodules of $B$. Then $$B_1\cap A\supseteq B_2\cap A\supseteq \cdots\supseteq B_n\cap A\supseteq \cdots$$ is a descending chain of  submodules of $A$, and $$(B_1+A)/A\supseteq (B_2+ A)/A\supseteq \cdots\supseteq  (B_n+A)/A\supseteq \cdots$$ is a descending chain of  submodules of $C\cong B/A$. So there is $s_1, s_2\in S$ and $k\in \mathbb{Z}^+$ such that $s_1B_k\cap A\subseteq B_n\cap A$ and $s_2(B_k+A)\subseteq B_n+A$ for any $n\geq k$. Then one can verify that $s_1s_2B_k\subseteq B_n$ for any $n\geq k$. Hence, $B$ is $u$-$S$-Artinian.
\end{proof}

Let $\p$ be a prime ideal of $R$. We say an $R$-module $M$ is \emph{$u$-$\p$-Artinian} provided that  $M$ is  $u$-$(R\setminus\p)$-Artinian. The next result gives a local characterization of Artinian modules.
\begin{proposition}\label{s-noe-m-loc-char}
	Let $R$ be a ring and $M$ an $R$-module. Then the following statements are equivalent:
	\begin{enumerate}
		\item  $M$ is Artinian;
		\item   $M$ is $u$-$\p$-Artinian for any $\p\in \Spec(R)$;
		\item   $M$ is  $u$-$\m$-Artinian for any $\m\in \Max(R)$.
	\end{enumerate}
\end{proposition}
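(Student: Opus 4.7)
The plan is to establish the cycle $(1) \Rightarrow (2) \Rightarrow (3) \Rightarrow (1)$. The first two implications are essentially formal: for $(1) \Rightarrow (2)$, if $M$ is Artinian then any descending chain of submodules is already stationary, so the witness $s=1$ works for every multiplicative subset, in particular for $R\setminus\p$ for any prime $\p$. The implication $(2) \Rightarrow (3)$ is immediate from $\Max(R)\subseteq\Spec(R)$.

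The substantive content is in $(3) \Rightarrow (1)$, and the plan is to use a standard partition-of-unity argument to glue the local witnesses. For each $\m\in\Max(R)$, the hypothesis provides a single element $s_\m\in R\setminus\m$ such that \emph{every} descending chain of submodules of $M$ is $(R\setminus\m)$-stationary with respect to $s_\m$; this uniformity across all chains is the crucial feature of the $u$-$S$-Artinian notion (as opposed to the $S$-Artinian one) and is what makes the argument go through. Because $s_\m\notin\m$ for every maximal ideal $\m$, the ideal $I=\langle s_\m\mid \m\in\Max(R)\rangle$ lies in no maximal ideal and therefore equals $R$. Writing $1=\sum_{i=1}^{n} a_i s_{\m_i}$ for finitely many maximal ideals $\m_1,\dots,\m_n$ and elements $a_i\in R$ fixes, once and for all, a finite collection of ``local denominators'' sufficient to cover $M$.

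Given any descending chain $M_1\supseteq M_2\supseteq\cdots$ of submodules of $M$, applying the $u$-$\m_i$-Artinian hypothesis to each of the finitely many primes $\m_i$ produces indices $k_i\in\mathbb{Z}^{+}$ with $s_{\m_i}M_{k_i}\subseteq M_n$ for all $n\geq k_i$. Set $k=\max\{k_1,\dots,k_n\}$. Then for every $n\geq k$ and every $i$, we have $s_{\m_i}M_k\subseteq M_n$, so
\[
M_k = 1\cdot M_k = \sum_{i=1}^{n} a_i s_{\m_i} M_k \subseteq M_n,
\]
which forces $M_k=M_n$ and shows that the chain is (ordinarily) stationary. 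Hence $M$ is Artinian.

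The main (and only) obstacle I anticipate is making sure the uniformity is exploited correctly: one must apply the $u$-$S$-Artinian hypothesis with $s_{\m_i}$ depending only on $\m_i$ and not on the chain, so that a single finite set of denominators $\{s_{\m_1},\dots,s_{\m_n}\}$ works simultaneously for every descending chain. This is what distinguishes the argument from the analogous (and genuinely false) attempt for $S$-Artinian modules, and it is precisely the reason the local-global principle holds for the uniform version.
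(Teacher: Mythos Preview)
Your proof is correct and follows essentially the same partition-of-unity argument as the paper: the family $\{s_\m\}_{\m\in\Max(R)}$ generates $R$, so finitely many suffice, and taking $k=\max\{k_{\m_1},\dots,k_{\m_n}\}$ forces $M_k\subseteq M_n$ for all $n\geq k$.

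One remark on your commentary: your claim that uniformity is ``crucial'' and that the analogous local--global statement for (non-uniform) $S$-Artinian modules is ``genuinely false'' is not accurate. The paper's proof fixes the descending chain \emph{first} and only then selects $s_\m$ and $k_\m$ for each $\m$; in that order, the element $s_\m$ is allowed to depend on the chain, so the argument goes through verbatim under the weaker hypothesis that $M$ is merely $(R\setminus\m)$-Artinian for every $\m$. Your version front-loads the choice of the $s_{\m_i}$ before looking at any chain, which is perfectly valid under the uniform hypothesis but is not forced by the logic of the implication $(3)\Rightarrow(1)$.
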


\begin{proof}  $(1)\Rightarrow (2)\Rightarrow (3)$: Trivial.
	
	$(3)\Rightarrow (1)$: Let $M_1\supseteq M_2\supseteq \cdots\supseteq M_n\supseteq \cdots$ be a descending chain of submodules of $M$. Then , for each $\m\in\Max(R)$, there exist $s_\m\not\in \m$ and $k_{\m}\in \mathbb{Z}^{+}$ such that  $s_\m M_{k_{\m}}\subseteq M_n$ for each $n\geq k_{\m}$. Since $R$ is generated by $\{s_\m\mid \m\in\Max(R)\}$. So there is a finite subset $\{s_{\m_1},\dots,s_{\m_t}\}$ that generates $R$. Let $k=\max\{k_{\m_1},\dots,k_{\m_t}\}$. Then $M_{k}=\langle s_{\m_1},\dots,s_{\m_t}\rangle M_{k}\subseteq \sum\limits_{i=1}^t(s_{\m_i}M_{k_{\m_i}}) \subseteq M_n$ for all $n\geq k$.  Hence $M$ is Artinian.
\end{proof}

Let $R$ be a ring and $S$ a multiplicative subset of $R$. Recall from \cite{ONTK21}, an $R$-module $M$ is said to be $S$-cofinite (called finitely $S$-cogenerated in \cite[Definition 3]{ONTK21}) if  for each nonempty family of submodules $\{M_i\}_{i\in \Delta}$ of $M$, $\bigcap\limits_{i\in \Delta}M_i=0$ implies that $s(\bigcap\limits_{i\in \Delta'}M_i)=0$ for some $s\in S$ and a  finite subset $\Delta'\subseteq \Delta$.
If $S=\{1\}$, then $S$-cofinite modules are exactly the classical cofinite modules.

\begin{definition} Let $R$ be a ring and $S$ a multiplicative subset of $R$.  An $R$-module $M$ is called $u$-$S$-cofinite $($with respect to $s)$ if there is an $s\in S$ such that for each nonempty family of submodules $\{M_i\}_{i\in \Delta}$ of $M$, $\bigcap\limits_{i\in \Delta}M_i=0$ implies that $s(\bigcap\limits_{i\in \Delta'}M_i)=0$ for a  finite subset $\Delta'\subseteq \Delta$.
\end{definition}

Obviously, cofinite $R$-modules are $u$-$S$-cofinite, and $u$-$S$-cofinite $R$-modules is $S$-cofinite.

\begin{proposition} \label{s-loc-u-cofini}
	Let $R$ be a ring,  $M$ an $R$-module and $S$ a multiplicative subset of $R$. Then the following statements hold.
	\begin{enumerate}
		\item If $S_1\subseteq S_2$ are  multiplicative subsets of $R$ and $M$ is $u$-$S_1$-cofinite. Then  $M$ is $u$-$S_2$-cofinite.
		\item Suppose $S^{\ast}$ is the saturation of $S$. Then $M$ is $u$-$S$-cofinite if and only if $M$ is $u$-$S^{\ast}$-cofinite.
	\end{enumerate}
\end{proposition}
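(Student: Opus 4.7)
The plan is to mimic the proof of Proposition 2.3 (the analogous statement for $u$-$S$-Artinian modules), since the definition of $u$-$S$-cofinite is structurally identical to that of $u$-$S$-Artinian in its dependence on a single chosen element $s\in S$.

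For part (1), I would simply observe that if $M$ is $u$-$S_1$-cofinite with respect to some $s\in S_1$, then $s\in S_2$ as well, and the very same element $s$ witnesses the $u$-$S_2$-cofiniteness of $M$ (the defining condition does not involve $S_1$ beyond the requirement that the witness lie in the multiplicative set). So (1) is essentially a tautology.

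For part (2), the forward implication is immediate from (1) since $S\subseteq S^{\ast}$. For the converse, suppose $M$ is $u$-$S^{\ast}$-cofinite with respect to some $r\in S^{\ast}$. By definition of the saturation, there exists $t\in R$ such that $rt\in S$. I would propose that $rt$ witnesses the $u$-$S$-cofiniteness of $M$: given any family $\{M_i\}_{i\in\Delta}$ of submodules of $M$ with $\bigcap_{i\in\Delta} M_i=0$, there is a finite subset $\Delta'\subseteq\Delta$ with $r\bigl(\bigcap_{i\in\Delta'}M_i\bigr)=0$, and multiplying by $t$ gives $rt\bigl(\bigcap_{i\in\Delta'}M_i\bigr)=0$ with $rt\in S$.

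There is no real obstacle here; the argument is a direct transcription of Proposition 2.3 with ``descending chain'' replaced by ``family with zero intersection'' and ``$sM_k\subseteq M_n$'' replaced by ``$s(\bigcap_{i\in\Delta'}M_i)=0$''. The only point worth being careful about is that the witness element must truly lie in $S$ (not merely in $S^{\ast}$), which is exactly what passing from $r$ to $rt$ accomplishes.
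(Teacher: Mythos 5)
Your proposal is correct and is essentially the same argument as the paper's: part (1) is the tautological observation that the same witness works, and for the converse of (2) the paper likewise passes from the witness $r\in S^{\ast}$ to $rt\in S$ and notes that $rt\bigl(\bigcap_{i\in\Delta'}M_i\bigr)=0$. No differences worth noting.
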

\begin{proof} (1) is trivial.
	
	(2) If $M$ is $u$-$S$-cofinite, then $M$ is also $u$-$S^{\ast}$-cofinite by (1). Suppose $M$ is
	$u$-$S^{\ast}$-cofinite. Then there is an $r\in S^{\ast}$ such that for each nonempty family of submodules $\{M_i\}_{i\in \Delta}$ of $M$, $\bigcap\limits_{i\in \Delta}M_i=0$ implies that $r(\bigcap\limits_{i\in \Delta'}M_i)=0$ for a  finite subset $\Delta'\subseteq \Delta$. Let $s:=rt\in S$ for some $t\in R$. Then $sr(\bigcap\limits_{i\in \Delta'}M_i)=s0=0$. Hence  $M$ is $u$-$S$-cofinite.
\end{proof}

Let $\p$ be a prime ideal of $R$. We say an $R$-module $M$ is \emph{$u$-$\p$-cofinite} provided that  $M$ is  $u$-$(R\setminus\p)$cofinite. The next result gives a local characterization of cofinite modules.
\begin{proposition}\label{s-noe-m-loc-char}
	Let $R$ be a ring and $M$ an $R$-module. Then the following statements are equivalent:
	\begin{enumerate}
		\item  $M$ is cofinite;
		\item   $M$ is $u$-$\p$-cofinite for any $\p\in \Spec(R)$;
		\item   $M$ is  $u$-$\m$-cofinite for any $\m\in \Max(R)$.
	\end{enumerate}
\end{proposition}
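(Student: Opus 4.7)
The plan is to mirror the argument for the analogous local characterization of Artinian modules (the earlier Proposition \ref{s-noe-m-loc-char} in this section), since $u$-$S$-cofiniteness is formally dual to $u$-$S$-Artinianness in exactly the way the proof exploits: both conditions assert the existence of a uniform $s\in S$ that witnesses a finiteness/stationarity property, and in both cases the uniformity collapses when the witness can be chosen outside of every maximal ideal.

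For the easy directions $(1)\Rightarrow(2)\Rightarrow(3)$, I would just observe that a cofinite $R$-module is trivially $u$-$S$-cofinite with respect to $s=1$ for every multiplicative set $S$, so in particular for $S=R\setminus\p$ with $\p\in\Spec(R)$; and since every maximal ideal is prime, $(2)\Rightarrow(3)$ is immediate.

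The substantive content is $(3)\Rightarrow(1)$. Let $\{M_i\}_{i\in\Delta}$ be a nonempty family of submodules of $M$ with $\bigcap_{i\in\Delta}M_i=0$. For each $\m\in\Max(R)$ the hypothesis furnishes an element $s_\m\in R\setminus\m$ and a finite subset $\Delta'_\m\subseteq\Delta$ with $s_\m\bigl(\bigcap_{i\in\Delta'_\m}M_i\bigr)=0$. The ideal generated by $\{s_\m\mid \m\in\Max(R)\}$ is contained in no maximal ideal, hence equals $R$, so finitely many of them, say $s_{\m_1},\dots,s_{\m_t}$, already generate $R$; write $1=\sum_{j=1}^{t}r_j s_{\m_j}$. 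Set $\Delta'=\bigcup_{j=1}^{t}\Delta'_{\m_j}$, which is still finite. Then $\bigcap_{i\in\Delta'}M_i\subseteq \bigcap_{i\in\Delta'_{\m_j}}M_i$ for each $j$, so $s_{\m_j}\bigl(\bigcap_{i\in\Delta'}M_i\bigr)=0$ for every $j$, and therefore
\[
\bigcap_{i\in\Delta'}M_i \;=\; 1\cdot\bigcap_{i\in\Delta'}M_i \;=\; \sum_{j=1}^{t}r_j s_{\m_j}\bigl(\textstyle\bigcap_{i\in\Delta'}M_i\bigr) \;=\; 0,
\]
which shows $M$ is cofinite.

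The only mild care needed — and the step I expect to be the main (albeit minor) obstacle — is making sure one enlarges the index set to a \emph{single} finite $\Delta'$ that works uniformly across all $t$ localizations simultaneously; the passage from the $u$-$\m$-witnesses $\Delta'_\m$ to their finite union is what allows the partition-of-unity identity $1=\sum r_j s_{\m_j}$ to annihilate the intersection in one stroke. No other technicalities arise, and no localization of $M$ is required.
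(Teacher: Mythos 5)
Your proposal is correct and follows essentially the same route as the paper: trivial implications for $(1)\Rightarrow(2)\Rightarrow(3)$, and for $(3)\Rightarrow(1)$ the partition-of-unity argument with finitely many witnesses $s_{\m_1},\dots,s_{\m_t}$ generating $R$ and a single finite index set absorbing all of them. In fact your choice $\Delta'=\bigcup_{j=1}^{t}\Delta'_{\m_j}$ is the right one; the paper writes $\Delta'=\bigcap_{i=1}^{t}\Delta'_{\m_i}$, which is evidently a slip (with an intersection the needed containments $\bigcap_{i\in\Delta'}M_i\subseteq\bigcap_{i\in\Delta'_{\m_j}}M_i$ fail), so your version is the corrected form of the paper's own argument.
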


\begin{proof}  $(1)\Rightarrow (2)\Rightarrow (3)$: Trivial.
	
	$(3)\Rightarrow (1)$: Let $\{M_i\}_{i\in \Delta}$ be a family of submodules of $M$ such that  $\bigcap\limits_{i\in \Delta}M_i=0$.   Then , for each $\m\in\Max(R)$, there exist $s_\m\not\in \m$ and $k_{\m}\in \mathbb{Z}^{+}$ such that  $s_\m(\bigcap\limits_{i\in \Delta'_\m}M_i)=0$ for a  finite subset $\Delta'_\m\subseteq \Delta$. Since $R$ can be generated by a finite subset $\{s_{\m_1},\dots,s_{\m_t}\}$.
	Let $\Delta'=\bigcap\limits_{i=1}^t\Delta'_{\m_i}$. Then $\bigcap\limits_{i\in \Delta'}M_i=\langle s_{\m_1},\dots,s_{\m_t}\rangle(\bigcap\limits_{i\in \Delta'}M_i)\subseteq \sum\limits_{i=1}^t (s_{\m_i}\bigcap\limits_{i\in \Delta'_{\m_i}}M_i)=0$. Hence, $M$ is cofinite.
\end{proof}

\begin{definition} Let $\N$ be a nonempty family of submodules of $M$. Then $N\in\N$ is called an $S$-minimal element of $\N$ with respect to $s$  if  whenever $N'\subseteq N$ for some $N\in \N$ then $sN\subseteq N'$. We say $M$ satisfies $(S$-MIN$)$-condition with respect to $s$ if every nonempty family of submodules of $M$ has an $S$-minimal element  with respect to $s$.
\end{definition}


It follows from \cite[Proposition 10.10]{af12} that  an $R$-module $M$ is Artinian if and only if every factory $M/N$ is finitely  cogenerated, if and only if $M$ satisfies (MIN)-Condition. Recently,  \"{O}zen extended this reusut to $S$-Artinian rings in \cite[Theorem 3]{ONTK21}.   Now we give a uniform $S$-version of  \cite[Proposition 10.10]{af12}.

\begin{theorem} \label{s-loc-u-cofini}
	Let $R$ be a ring,   $S$ a multiplicative subset of $R$ and $M$ an $R$-module. Let $s\in S$. Then the following statements are equivalent:
	\begin{enumerate}
		\item $M$ is a $u$-$S$-Artinian module with respect to $s$;
		\item $M$ satisfies $(S$-MIN$)$-condition with respect to $s$;
		\item  For any nonempty family $\{N_i\}_{i\in \Gamma}$ of submodules of $M$, there is a finite subset $\Gamma_0\subseteq \Gamma$ such that $s\bigcap\limits_{i\in \Gamma_0}N_i\subseteq \bigcap\limits_{i\in \Gamma}N_i$;
		\item    Every factor module $M/N$  is $u$-$S$-cofinite with respect to $s$.
	\end{enumerate}
\end{theorem}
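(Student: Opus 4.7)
The natural strategy is to prove the cyclic chain of implications $(1)\Rightarrow(2)\Rightarrow(3)\Rightarrow(4)\Rightarrow(1)$, imitating the classical proof of \cite[Proposition 10.10]{af12} but being careful to track the uniform element $s$ throughout.

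For $(1)\Rightarrow(2)$, I would argue contrapositively. Suppose $\N$ is a nonempty family of submodules with no $S$-minimal element with respect to $s$. Pick any $N_1\in\N$; since $N_1$ is not $S$-minimal, there is $N_2\in\N$ with $N_2\subseteq N_1$ and $sN_1\not\subseteq N_2$. Continuing recursively, construct a descending chain $N_1\supseteq N_2\supseteq\cdots$ in $\N$ with $sN_k\not\subseteq N_{k+1}$ for every $k$. This chain fails the $u$-$S$-Artinian condition with respect to $s$, so $(1)$ forces $(2)$. For $(2)\Rightarrow(3)$, given a family $\{N_i\}_{i\in\Gamma}$, apply $(S$-MIN$)$ to the family of all finite intersections $\{\bigcap_{i\in F}N_i:F\subseteq\Gamma\text{ finite}\}$ to obtain an $S$-minimal element $\bigcap_{i\in\Gamma_0}N_i$. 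For each $j\in\Gamma$, the inclusion $\bigcap_{i\in\Gamma_0\cup\{j\}}N_i\subseteq\bigcap_{i\in\Gamma_0}N_i$ combined with $S$-minimality yields $s\bigcap_{i\in\Gamma_0}N_i\subseteq N_j$, and intersecting over all $j$ gives the desired containment.

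For $(3)\Rightarrow(4)$, let $N\leq M$ and suppose $\{K_i/N\}_{i\in\Delta}$ is a family of submodules of $M/N$ with zero intersection; then $\bigcap_{i\in\Delta}K_i=N$. Apply $(3)$ to $\{K_i\}_{i\in\Delta}$ to obtain a finite $\Delta'\subseteq\Delta$ with $s\bigcap_{i\in\Delta'}K_i\subseteq N$, which translates into $s\bigcap_{i\in\Delta'}(K_i/N)=0$, establishing $u$-$S$-cofiniteness of $M/N$ with respect to $s$. Finally, for $(4)\Rightarrow(1)$, take a descending chain $M_1\supseteq M_2\supseteq\cdots$ and set $N=\bigcap_{n\geq 1}M_n$. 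The family $\{M_n/N\}_{n\geq 1}$ has zero intersection in $M/N$, so by $u$-$S$-cofiniteness of $M/N$ with respect to $s$ there is a finite subset of indices, whose least common upper bound we call $k$, such that $s\bigcap_{n\leq k}(M_n/N)=s(M_k/N)=0$; hence $sM_k\subseteq N\subseteq M_n$ for all $n\geq k$.

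I expect the main subtlety to be the implication $(1)\Rightarrow(2)$: one must resist the temptation to build a strictly descending chain (which would follow only from $s=1$) and instead engineer the chain using the failure of the condition $sN_k\subseteq N_{k+1}$, which is precisely what the $u$-$S$-Artinian hypothesis rules out for some $k$. The remaining implications are essentially bookkeeping around finite intersections and the natural correspondence between submodules of $M/N$ and submodules of $M$ containing $N$, and do not require any new idea beyond the classical argument.
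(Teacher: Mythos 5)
Your proposal is correct and follows essentially the same route as the paper: $(1)\Rightarrow(2)$ by the same contrapositive construction of a chain with $sN_k\not\subseteq N_{k+1}$, $(2)\Rightarrow(3)$ via an $S$-minimal finite intersection, and the same bookkeeping for $(3)\Rightarrow(4)\Rightarrow(1)$; the only cosmetic difference is that the paper also records $(3)\Rightarrow(1)$ separately and phrases $(3)\Rightarrow(4)$ through an $S$-minimal element, whereas you apply $(3)$ directly, which is if anything slightly cleaner.
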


\begin{proof}$(1)\Rightarrow (2):$ Suppose that $M$ is a $u$-$S$-Artinian module with respect to $s$. Let $\N$ be a nonempty set  of  submodules of $M$. On contrary, suppose $\N$ has no $S$-minimal element of $\N$ with respect to $s$. Take $N_1\in \N$, and then there exists $N_2\in\N$ such that $N_1\supseteq N_2$ and $sN_1\nsubseteq N_2$. Iterating these steps, we can obtain a descending chain $N_1\supseteq N_2\supseteq \cdots\supseteq N_n\supseteq \cdots$ such that $sN_k\nsubseteq N_{k+1}$ for any $k$.  This  implies $M$ is not a
	$u$-$S$-Artinian module with respect to $s$, which is a contradiction.
	
	$(2)\Rightarrow (3):$ Let  $\{N_i\}_{i\in \Gamma}$ be a nonempty family of submodules of $M$. Set $N=\bigcap\limits_{i\in \Gamma}N_i$. Let $\A$ be the set of all intersections of finitely many $N_i$. Then each $N_i\in \A$, and so $\A$ is nonempty. So there is an $S$-minimal element, say $A=\bigcap\limits_{i\in \Gamma_0}N_i$, of $\A$. It is clear that $N\subseteq A$. For each $i\in \Gamma$, we have $sA\subseteq A\cap N_i\subseteq N_i$ by the  $S$-minimality of $A$ with respect to $s$. So $sA\subseteq \bigcap\limits_{i\in \Gamma}N_i=N$.

	$(3)\Rightarrow (1):$ Let $N_1\supseteq N_2\supseteq \cdots\supseteq N_n\supseteq \cdots$ be a descending chain of submodules of $M$. Then there is positive inter $k$ such that $sN_k=s\bigcap\limits_{i=1}^kN_i \subseteq \bigcap\limits_{i=1}^{\infty}N_i$.  So $sN_k\subseteq N_n$ for any $n\geq k$.
	
	$(3)\Rightarrow (4):$ Suppose $\bigcap\limits_{i\in \Gamma}N_i/N=0$ for some family of submodules $\{N_i/N\}_{i\in \Gamma}$ of $M/N$. Then  $\bigcap\limits_{i\in \Gamma}N_i=N$. Set $\A=\{\bigcap\limits_{i\in \Gamma'}N_i\mid \Gamma'\subseteq \Gamma $ is a finite subset $\}.$ By $(3)$, $\A$ has an $S$-minimal element with respect to $s$, say $M_0=\bigcap\limits_{i\in \Gamma_0}N_i$ for some finite subset $\Gamma_0\subseteq \Gamma$. So, for any  $k\in \Gamma-\Gamma_0$, we have $sM_0\subseteq M_0\bigcap N_k$. Thus $sM_0\subseteq  M_0\cap (\bigcap\limits_{k\in \Gamma-\Gamma_0}N_k)\subseteq \bigcap\limits_{k\in \Gamma}N_k=N$. Consequently, $M/N$ is $u$-$S$-cofinite with respect to $s$.

	$(4)\Rightarrow (1):$  Let $N_1\supseteq N_2\supseteq \cdots\supseteq N_n\supseteq \cdots$ be a descending chain of submodules of $M$. Set $N=\bigcap\limits_{i=1}^{\infty}N_i$. By assumption, $M/N$ is is $u$-$S$-cofinite with respect to $s$. Note that $\bigcap\limits_{i=1}^{\infty}N_i/N=0$ in $M/N$. Then there is a positive integer $k$ such that $s\bigcap\limits_{i=1}^{k}N_i/N=0$ by $(4)$. So $sN_k\subseteq N_n$ for all $n\geq k$. So $M$ is a $u$-$S$-Artinian module with respect to $s$.
\end{proof}

\section{basic properties of $u$-$S$-Artinian  rings}

Recall from \cite[Definition 2.1]{stk20} that a ring $R$ is called an $S$-Artinian ring if any descending chain of ideals $\{I_i\}_{i\in \mathbb{Z}^{+}}$ of $R$ is $S$-stationary with respect to  some $s\in S$. Note that the $s$ is determined by the descending chain of ideals in the definition of  $S$-Artinian rings. Now we introduce a ``uniform'' version of $S$-Artinian rings.

\begin{definition} Let $R$ be a ring and $S$ a multiplicative subset of $R$.  Then $R$ is called a
	$u$-$S$-Artinian $($abbreviates uniformly $S$-Artinian$)$ ring $($with respect to $s)$ provided that $R$ is a $u$-$S$-Artinian $R$-module $($with respect to $s)$, that is, there exists $s\in S$ such that each descending chain $\{I_i\}_{i\in \mathbb{Z}^{+}}$ of ideals  of $R$ is $S$-stationary with respect to $s$.
\end{definition}

Since  $u$-$S$-Artinian rings are $u$-$S$-Artinian modules over themselves, the results in Secton 1 also hold for $u$-$S$-Artinian rings. Specially,  $u$-$S$-Artinian rings are  $S$-Artinian. However, $S$-Artinian rings are not  $u$-$S$-Artinian in general. A counterexample was given in  Example \ref{exam-not-ut-1}. If $0\in S$, then every ring $R$ is $u$-$S$-Artinian. So Artinian rings are not $u$-$S$-Artinian in general.  A  multiplicative set $S$ is said to be  regular if every element in $S$ is a non-zero-divisor. The following Proposition shows that  $u$-$S$-Artinian rings are exactly Artinian for any regular multiplicative set $S$.

\begin{proposition}\label{s-artinian-regular} Let $R$ be a ring and  $S$ a regular multiplicative subset of $R$. If $R$ is a $u$-$S$-Artinian ring, then $R$ is an Artinian ring.
\end{proposition}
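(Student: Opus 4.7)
The plan is to exploit the regularity hypothesis to force the witness $s \in S$ from the $u$-$S$-Artinian condition to actually be a unit in $R$; once that is done, the $u$-$S$-stationary condition collapses to the ordinary stationary condition and we are finished.

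First I would fix $s \in S$ with the property that every descending chain $\{I_i\}_{i\in \mathbb{Z}^{+}}$ of ideals of $R$ is $S$-stationary with respect to $s$. To pin down $s$, I would apply the $u$-$S$-Artinian condition to the specific descending chain
\[
(s) \supseteq (s^2) \supseteq (s^3) \supseteq \cdots \supseteq (s^n) \supseteq \cdots.
\]
Taking $k = 1$, there is some $k_0$ such that $s \cdot (s^{k_0}) \subseteq (s^n)$ for every $n \geq k_0$. Choosing $n = k_0 + 2$ yields $s^{k_0+1} \in (s^{k_0+2})$, so there exists $r \in R$ with $s^{k_0+1} = r s^{k_0+2} = rs \cdot s^{k_0+1}$, i.e.\ $s^{k_0+1}(1 - rs) = 0$.

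Here the regularity hypothesis does the work: since $s \in S$ is a non-zero-divisor, so is $s^{k_0+1}$, and we may cancel to conclude $rs = 1$. Thus $s$ is a unit in $R$. Now, given any descending chain $I_1 \supseteq I_2 \supseteq \cdots$ of ideals of $R$, the chosen $s$ provides $k$ with $sI_k \subseteq I_n$ for all $n \geq k$; multiplying both sides by $s^{-1}$ gives $I_k \subseteq I_n$, which combined with $I_k \supseteq I_n$ yields $I_k = I_n$ for all $n \geq k$. Hence every descending chain of ideals of $R$ is eventually stationary, and $R$ is Artinian.

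The only real obstacle is the unit step, and I expect it to go through cleanly because regularity is exactly the algebraic input needed to promote the inclusion $s^{k_0+1} \in (s^{k_0+2})$ to an actual equation $rs = 1$; no further localization or decomposition of $R$ is required.
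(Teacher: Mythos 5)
Your proof is correct and follows essentially the same route as the paper: apply the uniform witness $s$ to the chain $(s)\supseteq (s^2)\supseteq\cdots$, cancel a power of $s$ using regularity to get $rs=1$, and then observe that a unit witness makes every chain stationary. The only difference is that you spell out the final step (multiplying by $s^{-1}$) which the paper leaves implicit.
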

\begin{proof} Let $s$ be an element in $S$. Consider the descending chain $Rs\subseteq Rs^2\subseteq \cdots$ of ideals of $R$. Then there exists $k$ such that $sRs^k\subseteq Rs^n$ for any $n\geq k$. In particular, we have  $s^{k+1}=rs^{k+2}$ for some $r\in R$. Since $s$ is a non-zero-divisor, we have $1=rs$, and thus $s$ is a unit. So $R$ is an Artinian ring.
\end{proof}

In order to give a non-trivial $u$-$S$-Artinian ring which is not  Artinian, we consider the direct product of  $u$-$S$-Artinian ring.

\begin{proposition}\label{dird-s-semp} Let $R = R_1\times R_2$ be direct product of rings $R_1$ and $R_2$, and $S = S_1\times S_2$ a direct product of multiplicative subsets of $R_1$ and $R_2$. Then $R$ is a $u$-$S$-Artinian ring if and only if $R_i$ is a $u$-$S_i$-Artinian ring for each $i=1, 2$.
\end{proposition}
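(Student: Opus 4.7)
The plan is to exploit the fact that every ideal of $R = R_1 \times R_2$ decomposes as a product $I \times K$ of ideals $I \trianglelefteq R_1$ and $K \trianglelefteq R_2$, because $e_1 = (1,0)$ and $e_2 = (0,1)$ are orthogonal central idempotents with $e_1 + e_2 = 1$. Under this decomposition, a descending chain of ideals in $R$ corresponds coordinate-wise to descending chains in $R_1$ and $R_2$, and the same is true in reverse. This reduces the whole statement to checking that a single uniformizer $s = (s_1, s_2) \in S_1 \times S_2$ works in $R$ if and only if its components work in each factor.

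For the forward direction, I would assume that $R$ is $u$-$S$-Artinian with respect to some $s = (s_1, s_2) \in S_1 \times S_2$. Given a descending chain $I_1 \supseteq I_2 \supseteq \cdots$ of ideals in $R_1$, I form the descending chain $I_n \times R_2$ in $R$. Applying the hypothesis yields a $k \in \mathbb{Z}^{+}$ such that $(s_1,s_2)(I_k \times R_2) \subseteq I_n \times R_2$ for every $n \geq k$, and projecting onto the first coordinate gives $s_1 I_k \subseteq I_n$. Hence $R_1$ is $u$-$S_1$-Artinian with respect to $s_1$, and symmetrically $R_2$ is $u$-$S_2$-Artinian with respect to $s_2$.

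For the converse, I would take $s_i \in S_i$ witnessing that $R_i$ is $u$-$S_i$-Artinian, and show that $s = (s_1, s_2) \in S$ works for $R$. Given a descending chain $J_1 \supseteq J_2 \supseteq \cdots$ of ideals in $R$, write $J_n = I_n \times K_n$. The chains $\{I_n\}$ and $\{K_n\}$ are descending in $R_1$ and $R_2$ respectively, so there exist $k_1, k_2$ with $s_1 I_{k_1} \subseteq I_n$ for $n \geq k_1$ and $s_2 K_{k_2} \subseteq K_n$ for $n \geq k_2$. Taking $k = \max\{k_1,k_2\}$, we get $s J_k = (s_1 I_k) \times (s_2 K_k) \subseteq I_n \times K_n = J_n$ for all $n \geq k$, so $R$ is $u$-$S$-Artinian.

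The argument is essentially routine once the product decomposition of ideals is in hand, so there is no real obstacle; the only point requiring mild care is to select the single witness $s = (s_1,s_2)$ (guaranteed to lie in $S = S_1 \times S_2$) and to verify uniformity survives the $\max$ of the two indices $k_1, k_2$.
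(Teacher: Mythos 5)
Your proof is correct and follows essentially the same route as the paper: the forward direction tests the chain $I_n\times R_2$ (the paper uses $I_n\times 0$, an immaterial difference) with the witness $(s_1,s_2)$, and the converse decomposes each ideal of $R$ as $I_n\times K_n$ and takes $k=\max\{k_1,k_2\}$, exactly as in the paper's argument.
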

\begin{proof}  Suppose $R$ is a $u$-$S$-Artinian ring with respect to $s_1\times s_2$. Let $\{I^1_i\}_{i\in \mathbb{Z}^{+}}$ be a descending chain of ideals of $R_1$. Then $\{I^1_i\times 0\}_{i\in \mathbb{Z}^{+}}$ be a descending chain of ideals of $R$. Then there exists an integer $k$ such that  $(s_1\times s_2) (I^1_k\times 0)\subseteq I^1_n\times 0$ for all $n\geq k$. Hence $s_1 I^1_k\subseteq I^1_n$ for all $n\geq k$. So $R_1$ is a $u$-$S_1$-Artinian ring with respect to $s_1$. Similarly, $R_2$ is a $u$-$S_2$-Artinian ring with respect to $s_2$.  On the other hand, suppose  $R_i$ is a $u$-$S_i$-Artinian ring with respect to $s_i$ for each $i=1, 2$. Let $\{I_i=I^1_i\times I^2_i\}_{i\in \mathbb{Z}^{+}}$ be a descending chain of ideals of $R$. Then there exists an integer $k_i$ such that $s_i I^i_{k_i}\subseteq I^i_n$ for all $n\geq k_i$. Set $k=\max\{k_1,k_2\}$. Then $(s_1\times s_2) I_k=s_1I^1_k\times s_2I^2_k \subseteq I^1_n\times I^2_n= I_n$ for all $n\geq k$. So  $R$ is a $u$-$S$-Artinian ring.
\end{proof}

The promised non-Artinian $u$-$S$-Artinian rings are given as follows.
\begin{example}\label{exam-not-ut-2}
	Let $R=R_1\times R_2$ be a product of $R_1$ and $R_2$, where $R_1$ is an Artinian ring but  $R_2$ is not Artinian. Set $S=\{1\}\times\{1,0\}$. Then $R$ is a $u$-$S$-Artinian ring but not Artinian by Proposition \ref{dird-s-semp}.
\end{example}

Let $R$ be a ring and  $S$ a multiplicative subset of $R$. Recall from \cite{zwz21-p} that an $R$-module $M$ is called $u$-$S$-semisimple (with respect to $s$) provided that any $u$-$S$-short exact sequence $0\rightarrow A\xrightarrow{f} M\xrightarrow{g} C\rightarrow 0$ is $u$-$S$-split (with respect to $s$), i.e., there exists an $R$-homomorphism $h:B\rightarrow A$ such that $h\circ f=s\Id_A$ for some $s\in S$. A ring  $R$ is called a $u$-$S$-semisimple ring if every free $R$-module is $u$-$S$-semisimple. The rest of this section is devoted to show any $u$-$S$-semisimple ring is $u$-$S$-Artinian. First, we introduce the notion of $u$-$S$-simple modules.

\begin{definition}
	An $R$-module $M$ is said to be $u$-$S$-simple $($with respect to $s)$ provided that $M$ is not $u$-$S$-torsion with respect to some $s\in S$,  and any proper submodule of $M$ is $u$-$S$-torsion with respect to $s$.
\end{definition}

Since any proper submodule of a $u$-$S$-simple $R$-module is  $u$-$S$-torsion, we have any  $u$-$S$-simple $R$-module is $u$-$S$-semisimple by \cite[Lemma 2.1]{zwz21-p}. Moreover, we have the following result.

\begin{proposition}\label{ussi-usse}
	Suppose $M$ is a $u$-$S$-simple $R$-module. Then $M^{(\aleph)}$ is a $u$-$S$-semisimple $R$-module for any ordinal $\aleph$.
\end{proposition}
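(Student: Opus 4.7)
The plan is to exploit the cyclic structure of $u$-$S$-simple modules together with a resulting dichotomy. First, I would show that any $u$-$S$-simple module $M$ (with parameter $s$) is cyclic: since $sM\neq 0$, pick $m_0\in M$ with $sm_0\neq 0$; the submodule $Rm_0$ cannot be proper (otherwise $s(Rm_0)=0$ would force $sm_0=0$), so $M=Rm_0\cong R/I$ with $I=\Ann(m_0)$.

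Second, I would establish the key dichotomy that either $M$ is classically simple, or $s^2M=0$. Consider the submodule $sM\subseteq M$: if $sM\subsetneq M$, then $s(sM)=0$, i.e.\ $s^2M=0$. If instead $sM=M$, then (using $M\cong R/I$) we have $Rs+I=R$, so $1=rs+i$ for some $r\in R$, $i\in I$; multiplying by any $m\in M$ gives $rsm=m$, so multiplication by $s$ is an isomorphism of $M$. Then no nonzero submodule is annihilated by $s$, but by $u$-$S$-simplicity every proper submodule is, which forces every proper submodule to be zero. Hence $M$ is simple.

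Third, I would handle the two cases. If $M$ is classically simple, then $M^{(\aleph)}$ is classically semisimple. Given a $u$-$S$-short exact sequence $0\to A\xrightarrow{f}M^{(\aleph)}\xrightarrow{g}C\to 0$ with parameter $t$, the image $\Im(f)$ is a direct summand of $M^{(\aleph)}$, and $\Ker(f)$ is a direct summand of $A$ (since every submodule of a semisimple module is semisimple), say $A=\Ker(f)\oplus A'$. Composing the projection $\pi:M^{(\aleph)}\twoheadrightarrow\Im(f)$ with the inverse of the isomorphism $f|_{A'}:A'\xrightarrow{\cong}\Im(f)$ produces a map $h_0:M^{(\aleph)}\to A$ such that $h_0\circ f$ is the projection of $A$ onto $A'$ along $\Ker(f)$. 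Since $t\Ker(f)=0$, we get $(th_0)\circ f=t\Id_A$, providing the required $u$-$S$-split. If instead $s^2M=0$, then $s^2M^{(\aleph)}=0$, so every submodule $A$ is annihilated by $s^2$, and the zero map $h=0:M^{(\aleph)}\to A$ trivially satisfies $h\circ f=0=s^2\Id_A$.

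The main obstacle is proving the dichotomy via the cyclic structure; once that is in hand, both cases reduce to either classical semisimple theory (together with a single multiplication by $t$ to absorb $\Ker(f)$) or the trivial $u$-$S$-torsion situation.
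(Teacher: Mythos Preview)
Your dichotomy---that a $u$-$S$-simple module $M$ (with respect to $s$) is cyclic and is either classically simple (when $sM=M$) or satisfies $s^2M=0$---is correct and is a genuinely different route from the paper. The paper never isolates this dichotomy; instead it runs a Zorn's-lemma argument directly on $M^{(\aleph)}$, producing for each genuine submodule $N\subseteq M^{(\aleph)}$ a ``$u$-$S$-complement'' $L=M^{(\beta)}$ (for a maximal $\beta\subseteq\aleph$ with $s(N\cap M^{(\beta)})=0$) and then deducing that $N\hookrightarrow M^{(\aleph)}$ is $u$-$S$-split. If you restrict attention to \emph{genuine} short exact sequences (as the paper's proof does, tacitly relying on \cite{zwz21-p} for the reduction), your two cases finish the job more cheaply than the paper: in Case~1, $M^{(\aleph)}$ is classically semisimple so every submodule is a direct summand and $s=1$ works; in Case~2, every submodule $N\subseteq M^{(\aleph)}$ has $s^2N=0$, so the zero map is an $s^2$-splitting.

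However, as written your Case~1 treats arbitrary $u$-$S$-short exact sequences and contains a real error. You claim ``$\Ker(f)$ is a direct summand of $A$ (since every submodule of a semisimple module is semisimple)'', but $A$ is \emph{not} a submodule of $M^{(\aleph)}$; only $A/\Ker(f)\cong\Im(f)$ embeds there, and $A$ itself need not be semisimple. Concretely, take $R=\mathbb{Z}$, $S=\{p^n:n\geq 0\}$, $M=\mathbb{Z}/p$ (which is $u$-$S$-simple with respect to $s=1$, so you are in Case~1), and let $f:\mathbb{Z}/p^2\to\mathbb{Z}/p$ be reduction modulo $p$; this sits in a $u$-$S$-exact sequence with parameter $t=p$, yet $\Ker(f)=p\mathbb{Z}/p^2\mathbb{Z}$ is not a direct summand of $A=\mathbb{Z}/p^2$. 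One can repair the construction (define $\sigma:\Im(f)\to A$ by $\sigma(f(a))=ta$, which is well-defined since $t\Ker(f)=0$, and set $h=\sigma\circ\pi$), but the resulting splitting satisfies $h\circ f=t\,\Id_A$ with $t$ depending on the sequence, so the uniformity required by the definition of $u$-$S$-semisimple is lost. The same non-uniformity infects your Case~2 for general $u$-$S$-exact sequences: from $s^2\Im(f)=0$ and $t\Ker(f)=0$ you only get $ts^2A=0$, not $s^2A=0$.
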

\begin{proof} Suppose $M$ is a $u$-$S$-simple $R$-module with respect to $s$ and $N$ is a submodule of $M^{(\aleph)}$ such that $M^{(\aleph)}/N$ is not $u$-$S$-torsion with respect to $s$. Set $\Gamma=\{\alpha\subseteq\aleph\mid s(N\cap M^{(\alpha)})=0\}$. For each  $i\in \aleph$, we set $ M^{i}$ to be the  $i$-th component  of $M^{(\aleph)}$. Then we claim there is $i\in \aleph$ such that $s(N\cap M^{i})=0$, and hence $\Gamma$ is not empty. Indeed, on contrary, suppose $N\cap M^{i}$ is  not $u$-$S$-torsion for any $i\in \aleph$. Since $M$ is $u$-$S$-simple with respect to $s$, then $N\cap  M^{i}= M^{i}\cong M$, and hence $N=M^{(\aleph)}$ which is a contradiction. Let $\Lambda$ be a chain in $\Gamma$. Then  $s(N\cap \bigcup\limits_{\alpha\in\Lambda}M^{(\alpha)})=\bigcup\limits_{\alpha\in\Lambda} s(N\cap M^{(\alpha)})=0$. So $\Lambda$ has a upper bound. By Zorn Lemma, there  is a maxiaml element, say $\beta$, in $\Gamma$. Set $L=M^{(\beta)}$.  We claim that $M^{(\aleph)}$ is $u$-$S$-isomorphic to $N+L$. Otherwise, since $M^{j}\cong M$ is $u$-$S$-simple, there is an $M^{j}\not\subseteq N+L$ for some $j\in \aleph$. Hence $N\cap M^{(\beta\cup \{j\})}$ is $u$-$S$-torsion with respect to $s$, which contradicts the maximality of $\beta$. Since $N\cap L$ is $u$-$S$-torsion, $N$ is $u$-$S$-isomorphic to    $N/(N\cap L)$ and $(N+L)/(N\cap L)$ is $u$-$S$-isomorphic to $(N+L)$ which is also  $u$-$S$-isomorphic to $M^{(\aleph)}$.  Considering the split monomorphism $g:N/(N\cap L)\rightarrow (N+L)/(N\cap L)$, we have the natural embedding map $N\rightarrow M$ is also a  $u$-$S$-split monomorphism.
\end{proof}


\begin{theorem}\label{ussim-usart}
	Let $R$ be a ring and  $S$ a multiplicative subset of $R$. Suppose $R$ is a $u$-$S$-semisimple ring. Then $R$ is a $u$-$S$-artinian ring.
\end{theorem}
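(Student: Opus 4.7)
The approach is to prove $u$-$S$-Artinianity by a structural decomposition of $R$, mirroring the classical proof that semisimple rings are Artinian. First I would establish the auxiliary fact that every $u$-$S$-simple $R$-module is $u$-$S$-Artinian: since $M$ being $u$-$S$-simple with respect to $s$ forces every proper submodule of $M$ to be annihilated by $s$, any descending chain $M_1\supseteq M_2\supseteq\cdots$ of submodules is either constantly equal to $M$ (trivially stationary) or it drops below $M$ at some first index $k$, after which $sM_n=0\subseteq M_{n'}$ for every $n'\geq n\geq k$, giving $S$-stationarity with respect to $s$.

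The heart of the proof is showing that $u$-$S$-semisimplicity of $R$ produces a $u$-$S$-isomorphism $f:R\to\bigoplus_{i\in\Lambda} I_i$ in which each $I_i$ is a $u$-$S$-simple ideal of $R$. The construction parallels the Zorn argument used in the proof of Proposition \ref{ussi-usse}: one considers the collection of families of $u$-$S$-simple ideals of $R$ whose internal sum is $u$-$S$-independent and a $u$-$S$-direct summand of $R$, extracts a maximal such family, and then uses the $u$-$S$-splitting property applied to a $u$-$S$-short exact sequence involving any nonzero $u$-$S$-complement to produce an additional $u$-$S$-simple summand, contradicting maximality. Cyclicity of $R$ then forces $\Lambda$ to be essentially finite: the image $f(1)$ has finite support $F\subseteq\Lambda$, so $f(R)=Rf(1)\subseteq\bigoplus_{i\in F} I_i$, which means $\bigoplus_{i\notin F} I_i$ embeds into the $u$-$S$-torsion cokernel of $f$ and is therefore $u$-$S$-torsion, contradicting the $u$-$S$-simplicity of each $I_j$ with $j\notin F$ unless $\Lambda\setminus F=\emptyset$.

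Given this finite decomposition, each $I_i$ is $u$-$S$-Artinian by the auxiliary fact, the finite direct sum $\bigoplus_{i\in F} I_i$ is $u$-$S$-Artinian by Proposition \ref{s-u-noe-s-exact}, and $R$ inherits $u$-$S$-Artinianity through the $u$-$S$-isomorphism by Lemma \ref{s-iso}. The main obstacle will be the decomposition step, in particular ensuring that a single parameter $s\in S$ can serve uniformly across all the $u$-$S$-simple summands, the various $u$-$S$-splittings, and the Zorn-style maximality argument. Careful tracking of the parameters produced at each stage, together with the flexibility provided by Proposition \ref{s-loc} (which permits passage to the saturation $S^{\ast}$ without loss of generality), should suffice to consolidate everything into a single witness $s$.
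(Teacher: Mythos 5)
Your reduction steps are fine (a $u$-$S$-simple module is $u$-$S$-Artinian with the same witness $s$; finite direct sums and $u$-$S$-isomorphisms preserve $u$-$S$-Artinianness by Proposition \ref{s-u-noe-s-exact} and Lemma \ref{s-iso}; cyclicity plus finite support of $f(1)$ would cut the index set down to a finite one), but the heart of your argument --- that a $u$-$S$-semisimple ring is $u$-$S$-isomorphic to a direct sum of $u$-$S$-simple ideals --- is exactly what you never prove, and it does not follow from the tools you invoke. Proposition \ref{ussi-usse} runs in the opposite direction: it starts from a module already known to be $u$-$S$-simple and shows its direct powers are $u$-$S$-semisimple; it gives no way to \emph{find} a $u$-$S$-simple summand inside a non-$u$-$S$-torsion complement, which is the step your Zorn argument needs at every stage. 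The classical source of simple submodules (a maximal submodule of a cyclic module, obtained by Zorn, plus splitting) breaks down here: Zorn only yields an ideal maximal with respect to its quotient not being $u$-$S$-torsion, and for different larger ideals $H$ the annihilating elements $t_H\in S$ may all differ, so the quotient need not be $u$-$S$-simple \emph{with respect to one} $s$, which is what the definition demands. The same uniformity problem recurs across your (possibly infinite) family of summands and splittings, and your proposed remedies do not address it: passing to the saturation $S^{\ast}$ (Proposition \ref{s-loc}) never merges infinitely many distinct witnesses into a single one, and ``careful tracking of parameters'' is precisely the point at issue, not a proof of it. Note also a smaller soft spot: being ``not $u$-$S$-torsion with respect to $s$'' does not preclude $I_j$ being killed by some other element of $S$, so your final contradiction only works if the simplicity witnesses and the cokernel witness have already been consolidated --- again the same gap.

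For contrast, the paper sidesteps the decomposition entirely. It fixes one free module $R^{(\aleph)}$ with $\aleph>2^{\sharp(R)}\cdot\aleph_0$ (large enough to dominate all descending chains of ideals at once), takes the single $s$ for which this module is $u$-$S$-semisimple, and then, for a given chain $I_1\supseteq I_2\supseteq\cdots$, splits each sequence $0\rightarrow I_i\rightarrow I_{i-1}\rightarrow I_{i-1}/I_i\rightarrow 0$ with respect to that same $s$ (every module over a $u$-$S$-semisimple ring is $u$-$S$-projective by \cite[Theorem 3.5]{zwz21-p}). Composing the resulting $u$-$S$-isomorphisms $R\rightarrow I_k\oplus\bigoplus_i I_{i-1}/I_i$ and using that the image of $1$ has finite support shows $I_k$ is $u$-$S$-torsion with respect to $s$, whence $sI_k\subseteq I_n$ for all $n\geq k$. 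If you want to salvage your route, you would first have to prove a structure theorem (existence of $u$-$S$-simple summands with a single uniform witness) that is not in this paper or in the cited references; as written, the proposal has a genuine gap.
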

\begin{proof} Suppose $R$ is a $u$-$S$-semisimple ring. Let $\aleph$ be a cardinal larger than $2^{\sharp(R)}\cdot\aleph_0$, where $\sharp(R)$ is the cardinal of $R$. Then the free $R$-module $R^{(\aleph)}$ is  $u$-$S$-semisimple with respect to some $s\in S$. And so every subquotient of $R^{(\aleph)}$ is  also $u$-$S$-semisimple with respect to some $s\in S$. Let $I_1\supseteq I_2\supseteq \cdots\supseteq I_n\supseteq \cdots$ be a descending chain of ideals of $R$. Note that there are at most $2^{\sharp(R)}\cdot\aleph_0$ such chains. Set $R=I_0$. Consider the exact sequence $\xi_i:0\rightarrow I_i\rightarrow I_{i-1}\rightarrow I_{i-1}/I_i\rightarrow 0$ for any positive integer $i$. Since $R$ is a $u$-$S$-semisimple ring, then each $I_{i-1}/I_i$ is $u$-$S$-projective  by  \cite[Theorem 3.5]{zwz21-p}. So,  by \cite[Corollary 2.10]{zwz21-p},  each $\xi_i$ is $u$-$S$-split with respect to $s$. Hence, by \cite[Lemma 2.4]{zwz21-p}, there is a $u$-$S$-isomorphism $f_i: I_{i-1}\rightarrow I_i\oplus I_{i-1}/I_{i}$ with respect to $s$ for each $i$. So there  are  $u$-$S$-isomorphisms $$R\xrightarrow{f_1} I_1\bigoplus R/I_1\xrightarrow{f_2\oplus \Id} I_2\bigoplus I_1/I_2\bigoplus R/I_1\rightarrow \cdots\xrightarrow{f_{k}\oplus \Id} I_k\bigoplus(\bigoplus\limits_{i=0}^k I_i/I_{i-1})\rightarrow \cdots  $$
	Assume $f(1)\in \bigoplus\limits_{i=1}^k I_{i-1}/I_i\subseteq \bigoplus\limits_{i=1}^\infty I_{i-1}/I_i$ where $f=\lim\limits_{\rightarrow_k}((f_{k}\oplus \Id)\circ\cdots\circ  f_1)$. Then $R$ is $u$-$S$-isomorphic to $\bigoplus\limits_{i=0}^k I_i/I_{i-1}$ with respect to $s$. And so $I_k$ is $u$-$S$-torsion with respect to $s$. Hence $sI_k/I_{n}=0$, i.e., $sI_k\subseteq I_n$ for all $n\geq k$. So $R$ is a $u$-$S$-artinian ring.
\end{proof}

\begin{corollary}\label{ussim-usart-fgm}
	Let $R$ be a ring and  $S$ a multiplicative subset of $R$. Suppose $R$ is a $u$-$S$-semisimple ring. Then any $S$-finite $R$-module is $u$-$S$-artinian.
\end{corollary}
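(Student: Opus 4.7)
The plan is to assemble this from Theorem~\ref{ussim-usart} together with the closure properties of $u$-$S$-Artinian modules recorded in Proposition~\ref{s-u-noe-s-exact}. Unpacking the definition, the $S$-finiteness of $M$ gives an element $t\in S$ and a finitely generated submodule $N\subseteq M$ with $tM\subseteq N$. The natural exact sequence $0\to N\to M\to M/N\to 0$ will be the vehicle, so it suffices to show that both $N$ and $M/N$ are $u$-$S$-Artinian and then invoke Proposition~\ref{s-u-noe-s-exact}.

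For $N$, I would apply Theorem~\ref{ussim-usart} to conclude that $R$ is $u$-$S$-Artinian, then iterate the finite direct sum half of Proposition~\ref{s-u-noe-s-exact} to obtain that $R^n$ is $u$-$S$-Artinian for every $n\in\mathbb{Z}^{+}$. Since $N$ is finitely generated, it is a quotient of some such $R^n$, and applying the other half of Proposition~\ref{s-u-noe-s-exact} to the exact sequence $0\to\Ker\to R^n\to N\to 0$ yields that $N$ is $u$-$S$-Artinian.

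For $M/N$, observe that $t(M/N)=0$, so $M/N$ is $u$-$S$-torsion, and any $u$-$S$-torsion module $T$ is trivially $u$-$S$-Artinian: if $tT=0$, then every descending chain $\{T_i\}_{i\in\mathbb{Z}^{+}}$ of submodules satisfies $tT_1=0\subseteq T_n$ for all $n$. Combining this with the previous paragraph and applying Proposition~\ref{s-u-noe-s-exact} to $0\to N\to M\to M/N\to 0$ gives that $M$ is $u$-$S$-Artinian. There is no substantive obstacle here; the only book-keeping point is that the single element of $S$ witnessing $u$-$S$-Artinianness for $M$ is obtained by multiplying the witnesses for $N$ and for $M/N$ (together with $t$), exactly by the mechanism used in the proof of Proposition~\ref{s-u-noe-s-exact}.
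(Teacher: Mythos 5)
Your argument is correct and essentially the paper's: the core step---that a finitely generated module is $u$-$S$-Artinian as a quotient of some $R^n$, using Theorem \ref{ussim-usart} together with the direct-sum and exact-sequence halves of Proposition \ref{s-u-noe-s-exact}---is exactly what the paper invokes. The only cosmetic difference is that you pass from the $S$-finite module $M$ to its finitely generated submodule $N$ via the exact sequence $0\to N\to M\to M/N\to 0$ with $M/N$ $u$-$S$-torsion (hence trivially $u$-$S$-Artinian), whereas the paper instead cites closure of $u$-$S$-Artinian modules under $u$-$S$-isomorphisms (Lemma \ref{s-iso}); both reductions are valid.
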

\begin{proof} Since the class of $u$-$S$-artinian modules is closed under $u$-$S$-isomorphisms, we just need to show any finitely generated $R$-module is $u$-$S$-artinian, which can easily be deduced by Proposition \ref{s-u-noe-s-exact} and  Theorem \ref{ussim-usart}.
\end{proof}

\section{a characterization of $u$-$S$-Artinian  rings}

It is well-known that a ring $R$ is Artinian if an only if $R$ is a Noetherian ring with its Jacobson radical $\Jac(R)$ nilpotent and $R/\Jac(R)$ a semisimple ring (see \cite[Theorem 4.1.10]{fk16}). In this section, we will give a``uniform'' $S$-version of this result. We begin with the notion of $u$-$S$-maximal submodules and the $u$-$S$-Jacobson radical of a given $R$-module.
\begin{definition}  Let $R$ be a ring,  $S$ a multiplicative subset of $R$ and $s\in S$.
	Then a submodule $N$ is said to be $u$-$S$-maximal in  an $R$-module $M$ with respect to $s$ provided that
	\begin{enumerate}
		\item $M/N$ is not $u$-$S$-torsion with respect to $s$;
		\item if $N\subsetneqq H\subseteq M$, then  $M/H$ is $u$-$S$-torsion with respect to $s$.
	\end{enumerate}
\end{definition}
Note that an $R$-module  $N$ is a $u$-$S$-maximal submodule of $M$ (with respect to $s$) if and only if $M/N$ is $u$-$S$-simple (with respect to $s$). If $M$ does not have any $u$-$S$-maximal submodule with respect to $s$, then we denote by $\Jac_s(M)=M$. Otherwise, we denote by $\Jac_s(M)$ the intersection of all $u$-$S$-maximal submodules of $M$ with respect to $s$. The submodule $\Jac_s(M)$ of $M$ is called the $u$-$S$-Jacobson radical of $M$  with respect to $s$.
\begin{definition}\label{usjaco}  Let $R$ be a ring,  $S$ a multiplicative subset of $R$ and $M$ an $R$-module. Then the $u$-$S$-Jacobson radical of $M$ is defined to be  $\Jac_S(M)=\bigcap\limits_{s\in S}\Jac_s(M)$ under the above notions.
\end{definition}

First, we have the following obversion.
\begin{lemma}\label{quot-jac}
	Let $M$ be an $R$-module. Then
	\begin{center}
		$\Jac_s(M/\Jac_s(M))=0$, and $\Jac_S(M/\Jac_S(M))=0$.
	\end{center}
\end{lemma}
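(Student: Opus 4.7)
The plan is to exploit the standard correspondence between submodules of a quotient $M/J$ and submodules of $M$ containing $J$, together with the isomorphism $(M/J)/(N/J)\cong M/N$, to show that taking the radical idempotently collapses. The key observation is that $u$-$S$-maximality of $N$ in $M$ with respect to $s$ is by definition equivalent to $M/N$ being $u$-$S$-simple with respect to $s$; hence this property is preserved and reflected under passing to quotients modulo any submodule $J$ that is already contained in every such $N$.

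For the first identity, let $J=\Jac_s(M)$. If $M$ has no $u$-$S$-maximal submodule with respect to $s$, then by convention $J=M$, so $M/J=0$ and the claim is trivial (since $0$ has no $u$-$S$-maximal submodule either, as the only candidate would have $u$-$S$-torsion quotient). Otherwise, for every $u$-$S$-maximal submodule $N$ of $M$ with respect to $s$ we have $J\subseteq N$, and I will verify that the assignment $N\mapsto N/J$ is a bijection between $u$-$S$-maximal submodules of $M$ with respect to $s$ and those of $M/J$ with respect to $s$: one direction uses $(M/J)/(N/J)\cong M/N$ to transport $u$-$S$-simplicity downward, and the other uses the same isomorphism to lift it. Taking intersections then yields $\Jac_s(M/J)=\Jac_s(M)/J=J/J=0$.

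For the second identity, set $J=\Jac_S(M)=\bigcap_{s\in S}\Jac_s(M)$. If $J=M$ we are done, so assume some $s\in S$ with $\Jac_s(M)\neq M$ exists. Since $J\subseteq \Jac_s(M)$ for every $s\in S$, the argument of the previous paragraph still applies for each such $s$, giving $\Jac_s(M/J)=\Jac_s(M)/J$ whenever $\Jac_s(M)\neq M$ (and $\Jac_s(M/J)\subseteq M/J=\Jac_s(M)/J$ otherwise). Intersecting over all $s\in S$ and using that intersection commutes with quotienting by a common submodule, I obtain
\[
\Jac_S(M/J)\;\subseteq\;\bigcap_{s\in S}\bigl(\Jac_s(M)/J\bigr)\;=\;\Bigl(\bigcap_{s\in S}\Jac_s(M)\Bigr)\big/J\;=\;J/J\;=\;0,
\]
which finishes the proof.

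The only real subtlety is the handling of the edge cases built into the definition, namely the convention $\Jac_s(N)=N$ when $N$ possesses no $u$-$S$-maximal submodules with respect to $s$; once one checks that this convention is consistent with the correspondence (that is, if $\Jac_s(M)=M$ then $M/J$ also lacks $u$-$S$-maximal submodules with respect to $s$, and $\Jac_s(0)=0$), the rest is a routine application of the third isomorphism theorem. I do not expect any genuinely hard step.
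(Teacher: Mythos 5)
Your proof is correct and takes essentially the same route as the paper's: both rest on the correspondence (via the third isomorphism theorem) between $u$-$S$-maximal submodules of $M/J$ with respect to $s$ and $u$-$S$-maximal submodules of $M$ containing $J$, so that the radical of the quotient is the radical of $M$ modulo $J$. You merely spell out the edge case $\Jac_s(M)=M$ and the intersection over all $s\in S$ that the paper's one-line argument leaves implicit.
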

\begin{proof} We just note that an $R$-module $N/\Jac_s(M)\subseteq M/\Jac_s(M)$ is $u$-$S$-maximal with respect to $s$ if and only if $N+\Jac_s(M)\subseteq M$ is $u$-$S$-maximal with respect to $s$.
\end{proof}

\begin{proposition}\label{jac0}
	Suppose $M$ is a $u$-$S$-Artinian $R$-module with $\Jac_S(M)$ $u$-$S$-torsion.
	Then there exists $T\subseteq M$ such that $sT=0$ and $M/T\subseteq \bigoplus\limits_{i=1}^tS_i$ where each $S_i$ is $u$-$S$-simple with respect to $s$ for some $s\in S$. Consequently, $M$ is a $u$-$S$-Noetherian $R$-module.
\end{proposition}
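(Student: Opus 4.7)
The strategy is to apply the $(S$-MIN$)$-condition equivalent to $u$-$S$-Artinianness (Theorem~\ref{s-loc-u-cofini}) to the family of finite intersections of $u$-$S$-maximal submodules of $M$, and to exploit the resulting minimality to push its $a$-multiple into $\Jac_S(M)$. Fix $a\in S$ witnessing $(S$-MIN$)$ for $M$ and $b\in S$ with $b\Jac_S(M)=0$. Let $\mathcal{A}$ be the collection of all submodules of $M$ of the form $N_1\cap\cdots\cap N_k$, where each $N_j$ is a $u$-$S$-maximal submodule of $M$ (with respect to possibly different elements of $S$), and include $M$ as the empty intersection so that $\mathcal{A}$ is non-empty. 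The $(S$-MIN$)$-condition supplies an $S$-minimal element $T=N_1\cap\cdots\cap N_t\in\mathcal{A}$ with respect to $a$, where each $N_i$ is $u$-$S$-maximal with respect to some $s_i\in S$.

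Next I claim $aT\subseteq\Jac_S(M)$. For any $u$-$S$-maximal submodule $N$ of $M$ (with respect to any element of $S$), the submodule $N\cap T$ lies in $\mathcal{A}$ and is contained in $T$, so minimality yields $aT\subseteq N\cap T\subseteq N$; intersecting over all such $N$ and over all $s\in S$ gives the claim. Combined with $b\Jac_S(M)=0$, this produces $abT=0$. The canonical map $M\to\bigoplus_{i=1}^t M/N_i$ has kernel exactly $T$, so $M/T$ embeds into $\bigoplus_{i=1}^t S_i$ with $S_i:=M/N_i$ a $u$-$S$-simple module with respect to $s_i$; setting $s=ab$ delivers the first half.

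For the $u$-$S$-Noetherian consequence, $T$ is $u$-$S$-torsion (since $abT=0$) and hence trivially $u$-$S$-Noetherian. Each $u$-$S$-simple $S_i$ is $u$-$S$-Noetherian with respect to $s_i$: any ascending chain in $S_i$ either is eventually equal to $S_i$, or consists entirely of proper submodules, all annihilated by $s_i$. The Noetherian analogue of Proposition~\ref{s-u-noe-s-exact} (closure under finite direct sums, submodules, and extensions) then transfers $u$-$S$-Noetherianness to $\bigoplus_{i=1}^t S_i$, to $M/T$, and finally to $M$ via $0\to T\to M\to M/T\to 0$.

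The main obstacle will be the minimality-to-Jacobson step: a single $a\in S$ must simultaneously witness $(S$-MIN$)$ \emph{and} absorb $T$ into every $u$-$S$-maximal submodule of $M$ across every $s\in S$. This is precisely what the global definition $\Jac_S(M)=\bigcap_{s\in S}\Jac_s(M)$ buys us, and why a single-$s$ version $\Jac_s(M)$ would not suffice. A minor edge case worth dispatching separately is when $M$ has no $u$-$S$-maximal submodules at all, which forces $\Jac_S(M)=M$; then $bM=0$ and the statement holds with $T=M$ and the empty direct sum.
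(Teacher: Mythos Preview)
Your argument is correct and follows essentially the same route as the paper: both handle the degenerate case $\Jac_S(M)=M$ separately, both invoke Theorem~\ref{s-loc-u-cofini} on the family of $u$-$S$-maximal submodules to extract a finite intersection $T$ that is $u$-$S$-torsion, and both finish the Noetherian claim via the embedding $M/T\hookrightarrow\bigoplus_i M/N_i$ together with closure of $u$-$S$-Noetherian modules under extensions. The only cosmetic difference is that the paper appeals to condition~(3) of Theorem~\ref{s-loc-u-cofini} (a finite subfamily with $s\bigcap_{\Gamma_0}N_i\subseteq\bigcap_\Gamma N_i$) whereas you use condition~(2) (an $S$-minimal element of the finite-intersection lattice) and then deduce $aT\subseteq\Jac_S(M)$; these are two sides of the same equivalence.

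One small point to tidy: the statement asks for a \emph{single} $s\in S$ with $sT=0$ and each $S_i$ $u$-$S$-simple with respect to that same $s$. Your choice $s=ab$ gives $sT=0$, but you only record that $S_i$ is $u$-$S$-simple with respect to $s_i$. The paper addresses this by taking $s=t\,s_1\cdots s_n$; you can do the same by replacing $s=ab$ with $s=ab\,s_1\cdots s_t$, so that $s$ annihilates every proper submodule of each $S_i$ as well. This does not affect the $u$-$S$-Noetherian conclusion, which you handle correctly.
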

\begin{proof} If $M$  has no  $u$-$S$-maximal submodule, then we may assume $T=\Jac_S(M)=M$ is $u$-$S$-torsion. So the assertion  trivially holds. Now suppose $M$  has a  $u$-$S$-maximal submodule. Then the intersection of all $u$-$S$-maximal submodules of $M$ is  $u$-$S$-torsion. Since $M$ is $u$-$S$-Artinian, then there exists a finite family of $u$-$S$-maximal submodules, say $\{M_1,\dots,M_n\}$, of $M$ such that $T:=\bigcap\limits_{i=1}^n M_i$ is $u$-$S$-torsion with respect to some $t\in S$ by Theorem \ref{s-loc-u-cofini}. Note that $M/T$ is a submodule of $\bigoplus\limits_{i=1}^n M/M_i$, where $M/M_i$ is $u$-$S$-simple with respect to some $s_i\in S$. Set $s=ts_1\cdots s_n$. Then each $S_i:=M/M_i$ is $u$-$S$-simple with respect to $s$.  One can easily check  that  $M/T$, as a submodule of $\bigoplus\limits_{i=1}^n M/M_i$, is $u$-$S$-Noetherian with respect to $s$. Thus $M$ is $u$-$S$-Noetherian with respect to $s$.
\end{proof}

\begin{proposition}\label{jac1}
	Suppose $R$ is a $u$-$S$-Artinian ring. Then $R/\Jac_S(R)$ is a $u$-$S/\Jac_S(R)$-semisimple ring.
\end{proposition}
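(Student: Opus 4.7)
Denote $\bar{R} = R/\Jac_S(R)$ and let $\bar{S}$ be the image of $S$ in $\bar{R}$. The plan is to establish that $\bar{R}$, viewed as an $\bar{R}$-module with respect to $\bar{S}$, is $u$-$\bar{S}$-semisimple, and then to extend this to all free $\bar{R}$-modules. First I would observe that $\bar{R}$ inherits $u$-$\bar{S}$-Artinianness: any descending chain of ideals in $\bar{R}$ pulls back via the projection $R \to \bar{R}$ to a chain of ideals of $R$, and $u$-$S$-stationarity with respect to some $s \in S$ translates to $u$-$\bar{S}$-stationarity with respect to $\bar{s}$. Combined with Lemma \ref{quot-jac}, which gives $\Jac_{\bar{S}}(\bar{R}) = 0$ (trivially $u$-$\bar{S}$-torsion), we are in position to apply Proposition \ref{jac0} to $\bar{R}$.

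Proposition \ref{jac0} supplies some $\bar{s} \in \bar{S}$, finitely many $u$-$\bar{s}$-maximal ideals $M_1,\ldots,M_t$ of $\bar{R}$, an ideal $T = \bigcap_{i=1}^{t} M_i$ with $\bar{s} T = 0$, and an embedding $\bar{R}/T \hookrightarrow \bigoplus_{i=1}^{t} \bar{R}/M_i$ in which each summand is $u$-$\bar{S}$-simple with respect to $\bar{s}$. Next I would generalize Proposition \ref{ussi-usse} to show that a finite direct sum of possibly non-isomorphic $u$-$\bar{S}$-simple modules is $u$-$\bar{S}$-semisimple, and check that submodules of $u$-$\bar{S}$-semisimple modules remain $u$-$\bar{S}$-semisimple. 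Together these imply that $\bar{R}/T$ is $u$-$\bar{S}$-semisimple.

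Since $T$ is $u$-$\bar{S}$-torsion, the quotient map $\bar{R} \to \bar{R}/T$ is a $u$-$\bar{S}$-isomorphism; by closure of $u$-$\bar{S}$-semisimplicity under $u$-$\bar{S}$-isomorphism (a companion of Lemma \ref{s-iso}), $\bar{R}$ itself is $u$-$\bar{S}$-semisimple as an $\bar{R}$-module. A free $\bar{R}$-module is a direct sum of copies of $\bar{R}$, and running the same Zorn-type argument as in Proposition \ref{ussi-usse} with $\bar{R}$ playing the role of the $u$-$\bar{S}$-simple building block (after $u$-$\bar{S}$-isomorphism to the finite sum of simples) yields that every free $\bar{R}$-module is $u$-$\bar{S}$-semisimple. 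Hence $\bar{R}$ is a $u$-$\bar{S}$-semisimple ring.

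The main obstacle is the generalization of Proposition \ref{ussi-usse}: its proof handles copies of a single $u$-$S$-simple module via a Zorn maximality argument on subsets of the index set, and adapting it to a direct sum of distinct simples (and, more importantly, to an arbitrary direct sum of $u$-$S$-semisimple summands, which is needed for the free-module step) requires re-running the maximality argument componentwise. The companion fact that submodules of $u$-$\bar{S}$-semisimple modules are $u$-$\bar{S}$-semisimple is likewise not immediate from the definition and relies on the projectivity/lifting machinery of \cite{zwz21-p}.
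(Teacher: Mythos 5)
Your first half matches the paper's route: pass to $\overline{R}=R/\Jac_S(R)$, use Lemma \ref{quot-jac} to get a vanishing $u$-$S$-Jacobson radical, and apply Proposition \ref{jac0} to produce $s\in S$ and an ideal $T$ with $sT=0$ and an embedding $\overline{R}/T\subseteq \bigoplus_{i=1}^{t}S_i$ with each $S_i$ $u$-$S$-simple with respect to the same $s$. The gap is at the decisive step, namely that \emph{every} free module $\overline{R}^{(\aleph)}$ is $u$-$S$-semisimple — this, not the rank-one case, is what the definition of a $u$-$S$-semisimple ring requires. Your bootstrap (first show $\overline{R}$ itself is semisimple as a module, then ``run the Zorn argument of Proposition \ref{ussi-usse} with $\overline{R}$ as the building block'') does not go through as stated: that argument uses at every turn that the building block is $u$-$S$-simple, so that a submodule of a component is either $u$-$S$-torsion with respect to $s$ or the whole component, and $\overline{R}$ is not simple; moreover $\overline{R}$ is not $u$-$S$-isomorphic to the finite sum of simples, it is only $u$-$S$-isomorphic to a \emph{submodule} of it, so the parenthetical reduction does not repair this. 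The alternative you invoke — that an arbitrary direct sum of $u$-$S$-semisimple summands is again $u$-$S$-semisimple — is exactly the closure property that is not available (and not needed), and you flag it as the main obstacle without resolving it; as written the argument is therefore incomplete at its central point.

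The fix, which is how the paper argues, is to take the $\aleph$-fold copower of the \emph{data} furnished by Proposition \ref{jac0} rather than of the conclusion ``$\overline{R}$ is semisimple.'' Uniformity of $sT=0$ gives $sT^{(\aleph)}=0$, so $\overline{R}^{(\aleph)}$ is $u$-$S$-isomorphic to $(\overline{R}/T)^{(\aleph)}$, which embeds into $\bigl(\bigoplus_{i=1}^{t}S_i\bigr)^{(\aleph)}\cong \bigoplus_{i=1}^{t}S_i^{(\aleph)}$. Then only three closure facts are needed: Proposition \ref{ussi-usse} verbatim for each single simple $S_i$, closure under \emph{finite} direct sums, and closure under submodules (\cite[Proposition 3.3]{zwz21-p}); closure of $u$-$S$-semisimplicity under $u$-$S$-isomorphism finishes the proof, and the result is then read as a statement about the $\overline{S}$-semisimplicity of the $\overline{R}$-module $\overline{R}^{(\aleph)}$. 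No componentwise re-run of the Zorn argument for mixed or arbitrary semisimple summands is required.
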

\begin{proof} Write $J=\Jac_S(R)$, $\overline{R}=R/J$ and $\overline{S}=S/\Jac_S(R)$. Since $R$ is a $u$-$S$-Artinian ring, $\overline{R}$ is a  $u$-$S$-Artinian $R$-module by Proposition \ref{s-u-noe-s-exact}. By Lemma \ref{quot-jac}, $\Jac_S(\overline{R})=0$. It follows from by Proposition \ref{jac0} that there exists an element $s\in S$ and an submodule  $T$ of $\overline{R}$ such that $sT=0$ and $\overline{R}/T\subseteq \bigoplus\limits_{i=1}^tS_i$ with each $S_i$  $u$-$S$-simple with respect to $s$. Now let $F=\overline{R}^{(\aleph)}$ be a free $\overline{R}$-module with $\aleph$ an arbitrary cardinal. Then there is a short exact sequence $0\rightarrow T^{(\aleph)}\rightarrow \overline{R}^{(\aleph)}\rightarrow (\overline{R}/T)^{(\aleph)}\rightarrow 0$. By Proposition \ref{jac0}, $ (\overline{R}/T)^{(\aleph)}$ is a submodule of $(\bigoplus\limits_{i=1}^tS_i)^{(\aleph)}\cong \bigoplus\limits_{i=1}^t(S_i)^{(\aleph)}$ which is $u$-$S$-semisimple by Proposition \ref{ussi-usse}. Hence $ (\overline{R}/T)^{(\aleph)}$ is also $u$-$S$-semisimple by \cite[Proposition 3.3]{zwz21-p}. Since $sT^{(\aleph)}=0$,  $ (\overline{R})^{(\aleph)}$ is a $u$-$S$-semisimple $R$-module. So $ (\overline{R})^{(\aleph)}$ is actually a $u$-$\overline{S}$-semisimple $\overline{R}$-module, that is, $\overline{R}$ is a $u$-$\overline{S}$-semisimple ring.
\end{proof}

\begin{lemma}\label{nakya-J}
	Let $R$ be a ring,  $S$ a multiplicative subset of $R$ and $S^{\ast}$ the saturation of $S$. Suppose $r\in R$ and $s\in S$. If $r-s\in \Jac_S(R)$, then $r\in S^{\ast}$.
\end{lemma}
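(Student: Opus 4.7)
The plan is to argue by contradiction, mimicking the classical Nakayama-style fact that $1-j$ is a unit whenever $j\in\Jac(R)$. We want $rt\in S$ for some $t\in R$; equivalently, $Rr\cap S\neq\emptyset$. So assume toward a contradiction that $r\notin S^{\ast}$, i.e.\ $Rr\cap S=\emptyset$, and aim to produce a $u$-$S$-maximal ideal (with respect to $s$) that contains both $r$ and $s$, which is absurd.

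First I would form the family
\[
\mathcal{F}=\{I\text{ ideal of }R\mid Rr\subseteq I,\ s\notin I\}.
\]
Since $s\in S$ and $Rr\cap S=\emptyset$, the ideal $Rr$ itself lies in $\mathcal{F}$, so $\mathcal{F}\neq\emptyset$. Unions of chains in $\mathcal{F}$ still avoid $s$, so by Zorn's lemma $\mathcal{F}$ has a maximal element $M$. I claim $M$ is $u$-$S$-maximal in $R$ with respect to $s$: condition (1) holds because $s\notin M$ means $s(R/M)\neq 0$, so $R/M$ is not $u$-$S$-torsion with respect to $s$; condition (2) follows from the maximality of $M$ in $\mathcal{F}$, for if $M\subsetneq H$ then $H\notin\mathcal{F}$, forcing $s\in H$ and hence $s(R/H)=0$.

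Then by the definition of the $u$-$S$-Jacobson radical,
\[
\Jac_S(R)\subseteq \Jac_s(R)\subseteq M.
\]
Since $r-s\in\Jac_S(R)\subseteq M$ and $r\in Rr\subseteq M$, we conclude $s=r-(r-s)\in M$, contradicting $s\notin M$. Therefore $r\in S^{\ast}$.

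The only mildly delicate point is verifying that the Zorn-produced ideal $M$ is genuinely $u$-$S$-maximal with respect to $s$ in the paper's sense (rather than merely maximal among ideals missing $s$); this is why I isolate the two defining conditions explicitly in the argument. Everything else is formal manipulation with the inclusions $\Jac_S(R)\subseteq\Jac_s(R)$ and $Rr\subseteq M$.
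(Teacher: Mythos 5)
Your proof is correct, and it follows the same overall strategy as the paper (contradiction, Zorn's lemma to produce a $u$-$S$-maximal ideal containing $r$, then $\Jac_S(R)\subseteq$ that ideal forces $s$ into it), but with a genuinely different Zorn family. The paper maximizes over $\Lambda=\{J\supseteq Rr \mid R/J \text{ is not } u\text{-}S\text{-torsion}\}$, and then asserts that a maximal element $I$ is $u$-$S$-maximal; strictly speaking this only yields that for each $H\supsetneq I$ the quotient $R/H$ is killed by \emph{some} element of $S$ depending on $H$, so the verification of condition (2) of the definition with a single fixed element of $S$ is glossed over there. You instead maximize over $\mathcal{F}=\{I\supseteq Rr\mid s\notin I\}$, which makes both defining conditions of $u$-$S$-maximality \emph{with respect to the given} $s$ immediate: $s\notin M$ gives $s(R/M)\neq 0$, and any $H\supsetneq M$ contains $s$, hence $s(R/H)=0$. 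This buys a cleaner and tighter argument (the chain $\Jac_S(R)\subseteq\Jac_s(R)\subseteq M$ is then literal from the definitions), at no cost in generality; the paper's family has the mild advantage of not privileging the particular $s$, but that advantage is not needed for this lemma. All the remaining steps in your write-up (nonemptiness of $\mathcal{F}$ from $Rr\cap S=\emptyset$, closure under unions of chains, and the final computation $s=r-(r-s)\in M$) are sound.
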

\begin{proof} Assume on contrary  $r\not\in S^{\ast}$. Then we claim  $R/Rr$ is not $u$-$S$-torsion. Indeed, if $t(R/Rr)=0$ for some $t\in S$. Then $t=rr'$ for some $r'\in R$. So $r\in S^{\ast}$, which is a contradiction. We also claim that there exists a  $u$-$S$-maximal ideal $I$ of $R$ such that $r\in I$. Indeed, let $\Lambda$ be the set of ideals $J$ of $R$ that contains $r$ satisfying $R/J$ is not $u$-$S$-torsion. One can check that the union of any  ascending chain in $\Lambda$ is also in $\Lambda$. So there is a maximal element $I$ in $\Lambda$ by Zorn Lemma. Hence $I$ is a  $u$-$S$-maximal ideal of $R$.  Since
	$\Jac_S(R)\subseteq I$, we have $s\in I$. Then $s(R/I)=0$, which is a contradiction. So $r\in S^{\ast}$.
\end{proof}

\begin{proposition}\label{nakya-u} {\bf (Nakayama Lemma for $S$-finite modules)}
	Let $R$ be a ring,  $I\subseteq \Jac_S(R)$, $S$ a multiplicative subset of $R$  and  $M$ an $S$-finite  $R$-module. If $sM\subseteq IM$ for some $s\in S$, then $M$ is $u$-$S$-torsion.
\end{proposition}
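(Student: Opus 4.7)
The plan is to mimic the classical determinant-trick proof of Nakayama's lemma, adapted to the $S$-finite setting by using the parameter $t\in S$ witnessing $S$-finiteness to bring elements of $M$ back into the finitely generated ``approximating'' submodule, and then invoking Lemma \ref{nakya-J} in place of the classical statement that $1+\Jac(R)\subseteq R^{\times}$.

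First, since $M$ is $S$-finite, fix $t\in S$ and finitely many generators $m_1,\dots,m_n\in M$ with $tM\subseteq Rm_1+\cdots+Rm_n=:N$. Using the hypothesis $sM\subseteq IM$, each $sm_j$ can be written as a finite sum $sm_j=\sum_k a_{jk}x_{jk}$ with $a_{jk}\in I$ and $x_{jk}\in M$. Multiplying through by $t$ and using $tx_{jk}\in N$ to expand $tx_{jk}=\sum_l b_{jkl}m_l$, I obtain an identity of the form
\[
(ts)m_j=\sum_{l=1}^n c_{jl}m_l\qquad\text{with}\qquad c_{jl}=\sum_k a_{jk}b_{jkl}\in I,
\]
for every $j=1,\dots,n$. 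In matrix form this reads $(tsI_n-C)\vec m=0$ inside $M^n$.

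Next I apply the classical adjugate trick: multiplying $(tsI_n-C)\vec m=0$ by the adjugate of $tsI_n-C$ yields $\det(tsI_n-C)\,m_j=0$ for every $j$. Expanding the determinant, $\det(tsI_n-C)=(ts)^n+y$ for some $y\in I$ (every other term in the Leibniz expansion involves at least one entry from $C$, hence lies in $I$). Set $r:=(ts)^n+y$. Then $r-(ts)^n=y\in I\subseteq \Jac_S(R)$ and $(ts)^n\in S$, so Lemma \ref{nakya-J} applies and gives $r\in S^{\ast}$. Thus there exists $r'\in R$ with $rr'\in S$.

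Finally, since $rm_j=0$ for each generator $m_j$, we get $rr'N=0$, and combining with $tM\subseteq N$ yields $(rr't)M\subseteq rr'N=0$. Because $rr't\in S$, the module $M$ is $u$-$S$-torsion, as required. The one delicate point, and the main obstacle, is the bookkeeping in the opening paragraph: the expression $sm_j\in IM$ does not directly produce coefficients with respect to $m_1,\dots,m_n$, so one must insert the factor $t$ (producing the coefficient $ts$ of the identity matrix) in order to land inside $N$ and obtain honest matrix entries; once this is handled cleanly, the rest is a straightforward determinant computation plus the application of Lemma \ref{nakya-J}.
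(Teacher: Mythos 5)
Your proof is correct and follows essentially the same route as the paper: both use the $S$-finiteness witness to reduce to a relation over the finitely generated submodule, apply the determinant (Cayley--Hamilton) trick to multiplication by an element of $S$, and then invoke Lemma \ref{nakya-J} to see that the resulting element $(ts)^n+y$ lies in the saturation $S^{\ast}$. The only difference is that the paper cites \cite[Theorem 2.1]{M89} for the determinant trick after first replacing $M$ by the finitely generated submodule (noting $ss'F\subseteq IF$), whereas you carry out the adjugate computation explicitly; this is a matter of presentation, not of substance.
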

\begin{proof}   Let $F$ be a finitely generated submodule, say generated by $\{m_1,\dots,m_n\}$, of $M$  satisfying $s'M\subseteq F$ and $sM\subseteq IM$ for some $s', s\in S$. Then $ss'F\subseteq IF$. So we can assume $M$ itself is  generated by $\{m_1,\dots,m_n\}$.
	Then we have $a:=s^n+a_1s^{n-1}+\cdots+a_{n-1}s+a_n=0$ where $a_i\in I^i$ by \cite[Theorem 2.1]{M89}. Note that $aM=0$ and $a-s^n\in I$. By Lemma \ref{nakya-J}, $a\in S^{\ast}$, where $S^{\ast}$ is the saturation of $S$. It follows that  there is $r\in R$ such that $ar\in S$. Hence $arM=0$, and so $M$ is $u$-$S$-torsion.
\end{proof}

Let $I$ be an ideal of $R$. we say $I$ is $S$-nilpotent if $sI^k=0$ for some integer $k$ and $s\in S$.

\begin{proposition}\label{jac-nil}
	Let $R$ be a ring and  $S$ a multiplicative subset of $R$. Suppose $R$ is a $u$-$S$-Artinian ring. Then $\Jac_S(R)$ is $S$-nilpotent.
\end{proposition}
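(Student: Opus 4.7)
The plan is as follows. First, I would stabilize the descending chain $J \supseteq J^2 \supseteq \cdots$ (where $J := \Jac_S(R)$) using $u$-$S$-Artinianness to extract a uniform $s_0 \in S$ and an integer $k$ such that $s_0 J^k \subseteq J^n$ for all $n \geq k$. Set $I := J^k$; this delivers both $s_0 I \subseteq I^2$ (taking $n = 2k$) and $s_0 I \subseteq IJ$ (taking $n = k+1$). Showing that $I$ is $u$-$S$-torsion then immediately yields $S$-nilpotency of $J$, and my strategy is to obtain this via the $S$-Nakayama Lemma (Proposition \ref{nakya-u}), which requires an $S$-finite module.

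Suppose, toward a contradiction, that $I$ is not $u$-$S$-torsion. I would introduce the \emph{$S$-saturated annihilator} $K := \{r \in R \mid srI = 0 \text{ for some } s \in S\}$, which is an ideal of $R$ with $K \cap S = \emptyset$, so $K$ is proper. Let $\A$ be the family of ideals of $R$ not contained in $K$. Since $R \in \A$, the $(S\text{-MIN})$-condition (Theorem \ref{s-loc-u-cofini}) supplies an $S$-minimal element $L_0 \in \A$ with respect to $s_0$. Choosing $y \in L_0 \setminus K$ (so that $Iy$ is not $u$-$S$-torsion) and noting that $Ry \in \A$ with $Ry \subseteq L_0$, the $S$-minimality forces $s_0 L_0 \subseteq Ry$; in particular, $L_0$ is $S$-finite.

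Next, I would restrict attention to the family $\B$ of ideals $L \subseteq L_0$ for which $IL$ is not $u$-$S$-torsion; note $L_0 \in \B$ because $Iy$ is not $u$-$S$-torsion. Let $L_1 \in \B$ be $S$-minimal with respect to $s_0$. Using $s_0 I \subseteq I^2$, one checks $IL_1 \in \B$, because for each $t \in S$ one has $tI^2 L_1 \supseteq t s_0 \cdot IL_1 \neq 0$. $S$-minimality then yields $s_0 L_1 \subseteq IL_1 \subseteq JL_1$. Since $L_1 \subseteq L_0$ inherits $S$-finiteness via $s_0 L_1 \subseteq s_0 L_0 \subseteq Ry$, Proposition \ref{nakya-u} applied to $L_1$ with the ideal $J \subseteq \Jac_S(R)$ forces $L_1$ to be $u$-$S$-torsion, which contradicts $L_1 \in \B$. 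This contradiction completes the argument.

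The chief obstacle is arranging for the $S$-minimizer used in the Nakayama step to be \emph{simultaneously} $S$-finite and to satisfy the Nakayama inclusion $s_0 L \subseteq JL$; no single minimization seems to deliver both. Indeed, an ideal $L$ with $IL$ not $u$-$S$-torsion may fail to contain even one element $y$ with $Iy$ not $u$-$S$-torsion (the elements of $S$ killing $Iy$ can vary with $y$), so one cannot in general extract a principal subideal directly from $\B$. The auxiliary family $\A$, built from the $S$-saturation $K$, is designed precisely to produce such a principal subideal and thereby the $S$-finiteness of $L_0$; the nested $\B$-minimization inside $L_0$ then harvests the missing Nakayama inclusion.
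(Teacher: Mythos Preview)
Your argument has a genuine gap at the $S$-finiteness step. The claim that ``$L_1 \subseteq L_0$ inherits $S$-finiteness via $s_0 L_1 \subseteq s_0 L_0 \subseteq Ry$'' is not valid: $S$-finiteness of $L_1$ requires a finitely generated submodule $F$ \emph{of $L_1$} with $sL_1 \subseteq F$, whereas $Ry$ is not a submodule of $L_1$, and submodules of cyclic (or $S$-finite) modules over a ring that is not yet known to be $u$-$S$-Noetherian need not be $S$-finite. So Proposition~\ref{nakya-u} cannot be invoked for $L_1$ as written.

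The paper's proof avoids this trap by using a \emph{single} minimization on $\Gamma=\{L\mid J^kL\text{ is not }u\text{-}S\text{-torsion}\}$: for the $S$-minimal $L$ it asserts the existence of $x\in L$ with $J^kx$ not $u$-$S$-torsion, observes that $Jx\in\Gamma$ (because $tJ^kx\subseteq J^{k+1}x$), concludes $tL\subseteq Jx$, and applies Nakayama to the \emph{cyclic} module $Rx$. Your worry about the existence of such an $x$ is legitimate---the paper does not justify it---but it follows from Theorem~\ref{s-loc-u-cofini}(3): if every $x\in L$ admitted $s_x\in S$ with $s_xJ^kx=0$, then for the family $\{\Ann_R(J^kx)\}_{x\in L}$ there would be finitely many $x_1,\dots,x_n$ with $t\bigcap_i\Ann_R(J^kx_i)\subseteq\bigcap_{x\in L}\Ann_R(J^kx)=\Ann_R(J^kL)$; since $s_{x_1}\cdots s_{x_n}$ lies in the finite intersection, this would force $J^kL$ to be $u$-$S$-torsion.

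Note that this same annihilator argument shows your families $\A$ and $\Gamma$ coincide, so your first minimization already delivers the paper's setup: with $y\in L_0\setminus K$ and $s_0L_0\subseteq Ry$, one checks $J^{k+1}y$ is not $u$-$S$-torsion, hence (by the argument above) $Jy\not\subseteq K$, i.e.\ $Jy\in\A$, whence $s_0 Ry\subseteq s_0L_0\subseteq Jy=J\cdot Ry$, and Nakayama applied to the cyclic module $Ry$ gives the contradiction. Your second minimization is then superfluous.
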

\begin{proof}  Suppose $R$ is a $u$-$S$-Artinian ring with respect to some $t\in S$. Write $J=\Jac_S(R)$. Consider the descending chain $$J\supseteq J^2\supseteq \cdots \supseteq J^{n}\supseteq J^{n+1}\supseteq \cdots$$ Then there exists an integer $k$ such that $tJ^k\subseteq J^n$ for some $n\geq k$. We claim that $sJ^k=0$ for some $s\in S$. On contrary, set $\Gamma=\{I\subseteq R\mid sJ^kI\not=0 $ for all $s\in S\}$. Since $R\in \Gamma$, $\Gamma$ is non-empty. So there is an $S$-minimal element $I$ in $\Gamma$ by Theorem \ref{s-loc-u-cofini}.  Let $x\in I$ such that $sJ^kx\not=0$ for all $s\in S$. Then $0\not=stJ^kx\subseteq sJ^{k+1}x$. So $sJ^{k+1}x\not=0$ for any $s\in S$. Hence $Jx\in \Gamma$. Since $Jx\subseteq Rx\subseteq I$, there exists $s_1\in S$ such that  $s_1Rx\subseteq s_1I\subseteq Jx\subseteq Rx$ by the $S$-minimality of $I$ . So there exists $s_2\in S$ such that $s_2Rx=0$ by Proposition \ref{nakya-u}, which contradicts $sJ^kx\not=0$ for all $s\in S$.
\end{proof}

Recently, Qi et al. \cite[Definition 2.1]{QKWCZ21} introduced the notion of $u$-$S$-Noetherian rings. A ring $R$ is called a $u$-$S$-Noetherian (abbreviates uniformly $S$-Noetherian) ring  provided there exists an element $s\in S$ such that for any ideal $I$ of $R$, $sI \subseteq K$ for some finitely generated sub-ideal $K$ of $I$.
Finally, we will show the promised result.

\begin{theorem}\label{s-artinian-s-Noetherian} Let $R$ be a ring and  $S$ a multiplicative subset of $R$. Then the following statements are equivalent:
	\begin{enumerate}
		\item  $R$ is a $u$-$S$-Artinian ring;
		\item $R$ is a $u$-$S$-Noetherian ring, $\Jac_S(R)$ is  $S$-nilpotent and $R/\Jac_S(R)$ is a $u$- $S/\Jac_S(R)$-semisimple ring.
	\end{enumerate}
	
\end{theorem}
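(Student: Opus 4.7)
The plan is to exploit the filtration by powers of $J:=\Jac_S(R)$: assuming $sJ^k=0$ for some $s\in S$ and $k\in\mathbb{Z}^+$, each factor $J^i/J^{i+1}$ ($0\le i<k$) is annihilated by $J$ and hence is naturally a module over $\bar R:=R/J$, while $J^k$ itself is $u$-$S$-torsion. Writing $\bar S$ for the image of $S$ in $\bar R$, the strategy is to use the $u$-$\bar S$-semisimplicity of $\bar R$ to analyze the factors and then glue via Proposition \ref{s-u-noe-s-exact} (and its $u$-$S$-Noetherian counterpart, which is routine).

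For the direction $(1)\Rightarrow(2)$, the $S$-nilpotence of $J$ is precisely Proposition \ref{jac-nil} and the $u$-$\bar S$-semisimplicity of $\bar R$ is Proposition \ref{jac1}, so only the $u$-$S$-Noetherianness of $R$ remains. First I would apply Proposition \ref{jac0} to the $u$-$S$-Artinian $R$-module $\bar R$, noting that $\Jac_S(\bar R)=0$ by Lemma \ref{quot-jac}, to conclude that $\bar R$ is $u$-$S$-Noetherian. Next, for each factor $J^i/J^{i+1}$, viewed as a $u$-$\bar S$-Artinian $\bar R$-module, I would show $\Jac_{\bar S}(J^i/J^{i+1})$ is $u$-$\bar S$-torsion (since it coincides with $\Jac_S$ of the same $R$-module via the $J$-annihilation, this upgrades to $u$-$S$-torsion), and then invoke Proposition \ref{jac0} once more to obtain $u$-$S$-Noetherianness. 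A descending induction along the filtration using the Noetherian analogue of Proposition \ref{s-u-noe-s-exact} then yields that $R$ itself is $u$-$S$-Noetherian.

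For the direction $(2)\Rightarrow(1)$, I use the same filtration. Since $R$ is $u$-$S$-Noetherian, each $J^i$ is $S$-finite, so each quotient $J^i/J^{i+1}$ is $\bar S$-finite as an $\bar R$-module. Corollary \ref{ussim-usart-fgm}, applied in the $u$-$\bar S$-semisimple ring $\bar R$, then gives each $J^i/J^{i+1}$ is $u$-$\bar S$-Artinian, and since $J$ annihilates this quotient, $R$-submodules coincide with $\bar R$-submodules and $u$-$\bar S$-Artinianness upgrades to $u$-$S$-Artinianness over $R$. Combined with $J^k$ being $u$-$S$-torsion (hence trivially $u$-$S$-Artinian), an ascending induction along the short exact sequences $0\to J^{i+1}\to J^i\to J^i/J^{i+1}\to 0$ via Proposition \ref{s-u-noe-s-exact} delivers that $R$ is a $u$-$S$-Artinian $R$-module.

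The main obstacle sits in $(1)\Rightarrow(2)$, in establishing that $\Jac_{\bar S}(J^i/J^{i+1})$ is $u$-$\bar S$-torsion. In the classical setting this is immediate because every module over a semisimple ring is semisimple, but in the uniform $S$-setting the splittings behind Proposition \ref{ussim-usart} occur only up to $u$-$\bar S$-isomorphism and involve cardinality bookkeeping. My plan is to combine (i) the $(S$-MIN$)$-condition from Theorem \ref{s-loc-u-cofini} to extract $u$-$\bar S$-maximal submodules directly from the $u$-$\bar S$-Artinian hypothesis (avoiding a Zorn argument at large cardinality) with (ii) the preservation of $u$-$\bar S$-semisimplicity under subquotients \cite[Proposition 3.3]{zwz21-p}, to separate any non-$u$-$\bar S$-torsion element of $J^i/J^{i+1}$ from some $u$-$\bar S$-maximal submodule and thereby force $\Jac_{\bar S}(J^i/J^{i+1})$ into the $u$-$\bar S$-torsion part.
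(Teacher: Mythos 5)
Your $(2)\Rightarrow(1)$ half is correct and is in substance the paper's own argument (the paper runs the filtration with $sJ^i$ rather than $J^i$, but the ingredients are identical: Corollary \ref{ussim-usart-fgm} on the $\overline{S}$-finite factors over $\overline{R}=R/J$, the upgrade across $J$-annihilation, and gluing by Proposition \ref{s-u-noe-s-exact}). The genuine gap is in $(1)\Rightarrow(2)$: your whole proof of $u$-$S$-Noetherianness rests on the claim that $\Jac_{\overline{S}}(J^i/J^{i+1})$ is $u$-$\overline{S}$-torsion, and you do not prove it — you only sketch a plan whose first ingredient cannot work as stated. The $(S$-MIN$)$-condition of Theorem \ref{s-loc-u-cofini} produces $S$-\emph{minimal} elements of families of submodules, whereas a $u$-$\overline{S}$-maximal submodule is a \emph{maximal} element of the family of submodules with non-torsion quotient; a descending-chain/minimality hypothesis cannot manufacture such objects (already classically, the Artinian module $\mathbb{Z}(p^{\infty})$ has no maximal submodule at all). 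So the Artinian hypothesis is the wrong lever here; the separation must come from the semisimplicity of $\overline{R}$, and your item (ii) names that hypothesis without supplying a mechanism. In particular it ignores the real subtlety: a submodule $N$ all of whose proper oversubmodules have quotient killed by $s$ may itself satisfy $sM\subseteq N$, so condition (1) of $u$-$S$-maximality can fail, and one must argue around this.

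The claim is true and can be closed as follows, which is the step your plan is missing. Fix $M=J^i/J^{i+1}$; since $M$ is a quotient of a single free $\overline{R}$-module and subquotients inherit $u$-$\overline{S}$-semisimplicity (\cite[Proposition 3.3]{zwz21-p}), $M$ is $u$-$\overline{S}$-semisimple with respect to one $s$. For $x\in M$ with $sx\neq 0$, $u$-$\overline{S}$-split the inclusion $\iota\colon \overline{R}x\hookrightarrow M$ to get $h\colon M\to \overline{R}x$ with $h\circ\iota=s\Id$; choose by Zorn (harmless: in the cyclic module $\overline{R}x$ the condition $sx\notin N_0$ passes to unions of chains) a submodule $N_0\subseteq \overline{R}x$ maximal with $sx\notin N_0$, and set $N=h^{-1}(N_0)$. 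One checks $x\notin N$ and $s(M/H)=0$ for every $H$ with $N\subsetneqq H\subseteq M$. Now apply this with $x=sy$ for any $y$ with $s^2y\neq 0$: the case $sM\subseteq N$ is impossible (it would put $sy$ in $N$), so $N$ is $u$-$S$-maximal with respect to $s$ and misses $y$; hence $s^{2}\Jac_s(M)=0$ and a fortiori $\Jac_{\overline{S}}(M)$ is $u$-$\overline{S}$-torsion, after which your appeal to Proposition \ref{jac0} and the Noetherian analogue of Proposition \ref{s-u-noe-s-exact} goes through. Note also that the paper sidesteps your claim by a different route: it inducts on the exponent $m$ with $tJ^m=0$, passes to $R/J^{m-1}$, regards $tJ^{m-1}$ as a module over the $u$-$S/J$-semisimple (hence $u$-$S/J$-Noetherian, \cite[Corollary 3.6]{zwz21-p}) ring $R/J$, and glues with \cite[Lemma 2.12]{QKWCZ21}; your filtration approach is viable, but only once the radical-torsion claim above is actually established.
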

\begin{proof}
	$(1)\Rightarrow (2)$ Suppose  $R$ is a $u$-$S$-Artinian ring with respect to some $s\in S$. We just need to prove $R$ is $u$-$S$-Noetherian because the other two statements are showed in Proposition \ref{jac-nil} and Proposition \ref{jac1} respectively. Write $J=\Jac_S(R)$. By Proposition \ref{jac-nil}, there exits an integer $m$ such that $tJ^m=0$ for some $t\in S$. We will show $R$ is $u$-$S$-Noetherian by induction on $m$. Let $m=1$. It follows by Proposition \ref{jac0} that $R$ is $u$-$S$-Noetherian.  Now, let $m>1$. Set $\overline{R}=R/J^{m-1}$. Then $\overline{R}$ is also $u$-$S$-Artinian by Proposition \ref{s-u-noe-s-exact}. Note that  $\Jac_S(\overline{R})=J/J^{m-1}$. So $\Jac_S(\overline{R})^{m-1}$ is also $u$-$S$-torsion. Hence $\overline{R}$ is $u$-$S$-Noetherian by induction. Since $tJ^m=0$, $tJ^{m-1}$  can also be seen as an ideal of  $R/J$. Since $R/J$ is a $u$-$S/J$-semisimple ring,  $R/J$ is also $u$-$S/J$-Noetherian by \cite[Corollary 3.6]{zwz21-p}. So $tJ^{m-1}$, and hence $J^{m-1}$, are both $u$-$S$-Noetherian $R$-modules since $J^{m-1}$ is $u$-$S$-isomorphic to $tJ^{m-1}$. Considering the exact sequence $0\rightarrow J^{m-1}\rightarrow R\rightarrow R/J^{m-1}\rightarrow 0$, we have  $R$ is also $u$-$S$-Noetherian by \cite[Lemma 2.12]{QKWCZ21}.

	$(2)\Rightarrow (1)$ Write $J=\Jac_S(R)$. We may assume $R$ is $u$-$S$-Noetherian with respect to $s$ such that  $sJ^m=0$  and $R/J$ is a $u$-$S$-semisimple $R$-module with respect to $s$.  We claim that $J$ is a $u$-$S$-Artinian $R$-module. Since $sJ^{m-1}$ is an $S$-finite $R/J$-module and $R/J$ is a $u$-$S$-semisimple ring, we have $sJ^{m-1}$ is $u$-$S$-Artinian by Corollary \ref{ussim-usart-fgm}.  Consider the sequence $0\rightarrow sJ^{m-1}\rightarrow sJ^{m-2}\rightarrow sJ^{m-2}/sJ^{m-1} \rightarrow 0$. Since $sJ^{m-2}/sJ^{m-1}$ is an $S$-finite $R/J$-module, we have $sJ^{m-2}/sJ^{m-1}$ is $u$-$S$-Artinian by Corollary \ref{ussim-usart-fgm}. Since $sJ^{m-1}$ is  $u$-$S$-Artinian, $sJ^{m-2}$ is also $u$-$S$-Artinian by Proposition \ref{s-u-noe-s-exact}.  Iterating these steps, we have $sJ$ is also $u$-$S$-Artinian. So  $J$ is also $u$-$S$-Artinian since $J$ is $u$-$S$-isomorphic to $sJ$.

	Let $I_1\supseteq I_2\supseteq \cdots\supseteq I_n\supseteq \cdots$ be a descending chain of ideals of $R$.   Consider the following natural commutative diagram with exact rows: $$\xymatrix@R=20pt@C=25pt{
		0 \ar[r]^{}&I_i\cap J\ar@{^{(}->}[d]\ar[r]&I_i \ar[r]\ar@{^{(}->}[d]&(I_i+J)/J\ar[r] \ar@{^{(}->}[d] &0\\
		0 \ar[r]^{}&I_{i+1}\cap J\ar[r]&I_{i+1}  \ar[r]&(I_{i+1}+J)/J\ar[r] &0.\\}$$
	Since $J$ is $u$-$S$-Artinian, there is an integer $k_1$ such that $s(I_{k_1}\cap J)\subseteq I_n\cap J$ for $n\geq k_1$. Since $R/J$ is a $u$-$S/J$-semisimple ring,  $R/J$ is also a $u$-$S/J$-artinian ring by Theorem \ref{ussim-usart}.  Hence there is an integer $k_2$ such that $s((I_{k_2}+J)/J)\subseteq (I_n+J)/J$ for $n\geq k_2$. Taking $k=\max\{k_1,k_2\}$, we can easily deduce $sI_{k}\subseteq I_n$ for $n\geq k$. Hence $R$ is a $u$-$S$-Artinian ring.
\end{proof}

Let $R$ be a commutative ring and $M$ an $R$-module. Then the idealization of $R$ by $M$, denoted by $R(+)M$, is equal to $R\bigoplus M$ as $R$-modules with coordinate-wise addition and multiplication $(r_1,m_1)(r_2,m_2)=(r_1r_2,r_1m_2+r_2m_1)$. It is easy to verify that $R(+)M$ is a commutative ring with identity $(1,0)$(see \cite{DW09} for more details). Note that there is a natural exact sequence of $R(+)M$-modules:
$$0\rightarrow 0(+)M\rightarrow R(+)M\xrightarrow{\pi} R\rightarrow 0.$$ Let $S$ be a multiplicative subset of $R$. Then it is easy to verify that $S(+)M=\{(s,m)|s\in S, m\in M\}$ is a multiplicative subset of $R(+)M$ . Now, we give a $u$-$S$-Artinian property on  idealizations.

\begin{proposition}\label{trivial extension-usn} Let $R$ be a commutative ring, $S$ a multiplicative subset of $R$ and $M$  an $R$-module. Then $R(+)M$ is a $u$-$S(+)M$-Artinian ring if and only if $R$ is a
	$u$-$S$-Artinian ring and $M$ is an $S$-finite $R$-module.
\end{proposition}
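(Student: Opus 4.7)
The plan is to exploit the canonical short exact sequence of $R(+)M$-modules
$$0 \longrightarrow 0(+)M \longrightarrow R(+)M \xrightarrow{\pi} R \longrightarrow 0,$$
together with Proposition \ref{s-u-noe-s-exact}, after first recording a translation between $R$-module and $R(+)M$-module structures. Since $(r,m)\cdot x = rx$ on $R$ (via $\pi$) and $(r,m)\cdot (0,m') = (0,rm')$ on $0(+)M$, the $R(+)M$-submodules of $R$ correspond bijectively to the ideals of $R$, and the $R(+)M$-submodules of $0(+)M$ correspond bijectively to the $R$-submodules of $M$. Moreover, any witness $(s,m_0)\in S(+)M$ acts on $R$ as multiplication by $s$ and sends $(0,m)\in 0(+)M$ to $(0,sm)$; conversely every $s\in S$ lifts to $(s,0)\in S(+)M$. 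Hence $R$ is $u$-$S(+)M$-Artinian as an $R(+)M$-module if and only if it is $u$-$S$-Artinian as an $R$-module, and similarly $0(+)M$ is $u$-$S(+)M$-Artinian as an $R(+)M$-module if and only if $M$ is $u$-$S$-Artinian as an $R$-module.

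For the forward implication, assume $R(+)M$ is $u$-$S(+)M$-Artinian. The isomorphism $R\cong R(+)M/(0(+)M)$ and Proposition \ref{s-u-noe-s-exact} yield that $R$ is $u$-$S(+)M$-Artinian as an $R(+)M$-module, and the translation above promotes this to the statement that $R$ is a $u$-$S$-Artinian ring. To see that $M$ is $S$-finite, invoke Theorem \ref{s-artinian-s-Noetherian}: $R(+)M$ is also $u$-$S(+)M$-Noetherian, so the submodule $0(+)M$ is $S(+)M$-finite. Thus there exist $(0,m_1),\ldots,(0,m_n)\in 0(+)M$ and $(s,m_0)\in S(+)M$ with
$$(s,m_0)\cdot\bigl(0(+)M\bigr)\subseteq \langle (0,m_1),\ldots,(0,m_n)\rangle_{R(+)M}.$$
Unwinding the idealization multiplication on each side gives $sM\subseteq Rm_1+\cdots+Rm_n$, which is exactly the $S$-finiteness of $M$.

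For the reverse implication, assume $R$ is $u$-$S$-Artinian and $M$ is $S$-finite. I first check that $M$ is a $u$-$S$-Artinian $R$-module. Since $R$ is $u$-$S$-Artinian, Proposition \ref{s-u-noe-s-exact} makes every $R^n$, and hence every finitely generated $R$-module (as a quotient of $R^n$), $u$-$S$-Artinian. Choose a finitely generated submodule $F\subseteq M$ with $sM\subseteq F$ for some $s\in S$; then the multiplication map $\mu_s:M\to F$, $m\mapsto sm$, fits into a short exact sequence $0\to \ker\mu_s\to M\to sM\to 0$ in which the kernel is annihilated by $s$, hence is $u$-$S$-torsion and therefore $u$-$S$-Artinian, while the image $sM$ sits inside the $u$-$S$-Artinian module $F$ and is therefore itself $u$-$S$-Artinian. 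A further application of Proposition \ref{s-u-noe-s-exact} gives that $M$ is $u$-$S$-Artinian. Translating back, both $R$ and $0(+)M$ are $u$-$S(+)M$-Artinian $R(+)M$-modules, and one final application of Proposition \ref{s-u-noe-s-exact} to the displayed short exact sequence proves $R(+)M$ is $u$-$S(+)M$-Artinian.

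The main obstacle I anticipate is the bookkeeping between the multiplicative sets $S$ and $S(+)M$: specifically, verifying that a general witness $(s,m_0)\in S(+)M$ can without loss be replaced by $(s,0)$ in both of the module-theoretic identifications above, and that the Noetherian witness $(s,m_0)$ extracted in the forward direction really does project cleanly to the desired $s\in S$ realizing the $S$-finiteness of $M$. The rest is essentially a mechanical application of Proposition \ref{s-u-noe-s-exact} and Theorem \ref{s-artinian-s-Noetherian}.
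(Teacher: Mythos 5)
Your proposal is correct and follows essentially the same route as the paper: the canonical sequence $0\to 0(+)M\to R(+)M\xrightarrow{\pi} R\to 0$, Theorem \ref{s-artinian-s-Noetherian} to extract the $S(+)M$-finiteness of $0(+)M$ (hence the $S$-finiteness of $M$) in the forward direction, and the two-out-of-three property for $u$-$S$-Artinianness in the converse. The only cosmetic difference is that you invoke Proposition \ref{s-u-noe-s-exact} over the ring $R(+)M$ after making the module-structure translation explicit, whereas the paper re-runs the descending-chain/diagram argument by hand; both are fine.
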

\begin{proof} Suppose $R(+)M$ is a $u$-$S(+)M$-Artinian ring. Then $R\cong R(+)M/0(+)M$ is a $u$-$S$-Artinian ring essentially by Proposition \ref{s-u-noe-s-exact}. By Theorem \ref{s-artinian-s-Noetherian}, $R(+)M$ is a $u$-$S(+)M$-Noetherian ring. So $0(+)M$ is an  $S(+)M$-finite ideal of $R(+)M$, which implies that $M$ is an $S$-finite $R$-module.

	Suppose $R$ is a $u$-$S$-Artinian ring and $M$ is  an $S$-finite $R$-module. Then $M$ is $u$-$S$-Artinian $R$-module  by Proposition \ref{s-u-noe-s-exact}. Let $I^{\bullet}: I_1\supseteq I_2\supseteq \cdots $ be  an descending chain of ideals of $R(+)M$. Then there is  an descending chain of ideals of $R$: $\pi(I^{\bullet}): \pi(I_1)\supseteq \pi(I_2)\supseteq \cdots $, where $\pi:R(+)M\twoheadrightarrow R$ is the natural epimorphism. Thus there is an element $s'\in S$  which is independent of $I^{\bullet}$ satisfying that there exists $k'\in \mathbb{Z}^{+}$ such that  $s'\pi(I_{k'})\subseteq \pi(I_n)$ for any $n\geq k'$. Similarly, $I^{\bullet}\cap 0(+)M:   I_1\cap 0(+)M\supseteq I_2\cap 0(+)M\supseteq \cdots $ is  an descending chain of sub-ideals of $0(+)M$ which is equivalent to a descending chain of submodules of $M$. So there is an element $s''\in S$  satisfying that there exists $k''\in \mathbb{Z}^{+}$ such that $s''(I_{k''}\cap 0(+)M)\subseteq I_n\cap 0(+)M$ for any $n\geq k''$. Let $k=\max(k',k'')$ and $n\geq k$. Consider the following natural commutative diagram with exact rows:
	$$\xymatrix@R=20pt@C=25pt{
		0 \ar[r]^{}&I_n\cap 0(+)M \ar@{^{(}->}[d]\ar[r]&I_n \ar[r]\ar@{^{(}->}[d]&\pi(I_n)\ar[r] \ar@{^{(}->}[d] &0\\
		0 \ar[r]^{}&I_k\cap 0(+)M \ar[r]&I_k \ar[r]&\pi(I_k) \ar[r] &0.\\}$$
	Set $s=s's''$. Then we have $sI_k\subseteq I_n$ for any $n\geq k$. So $R(+)M$ is a $u$-$S(+)M$-Artinian ring.
\end{proof}

Taking $S=\{1\}$ in Proposition \ref{trivial extension-usn}, we can easily deduce the following classical result.
\begin{corollary} Let $R$ be a commutative ring and $M$  an $R$-module. Then $R(+)M$ is an Artinian ring if and only if $R$ is an Artinian ring and $M$ is a finitely generated $R$-module.
\end{corollary}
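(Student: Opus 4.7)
The proof is an immediate specialization of Proposition~\ref{trivial extension-usn} to the case $S=\{1\}$, so the plan is simply to carry out the translation of the uniform $S$-notions into their classical counterparts and verify that each matches.

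First I would set up the obvious dictionary. A $u$-$\{1\}$-Artinian ring is exactly an Artinian ring, since the element $s\in S$ in the definition must be $1$, and a $\{1\}$-finite $R$-module is exactly a finitely generated $R$-module. These two identifications translate the right-hand side of Proposition~\ref{trivial extension-usn} verbatim into the right-hand side of the corollary.

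The only mildly non-tautological point is on the left-hand side. With $S=\{1\}$ we have $S(+)M=\{(1,m)\mid m\in M\}$, which is not literally the trivial multiplicative set. However, a one-line computation shows $(1,m)(1,-m)=(1,0)$, so every element of $S(+)M$ is a unit in $R(+)M$. Hence the saturation $(S(+)M)^{\ast}$ contains $(1,0)=1_{R(+)M}$, and by the saturation invariance of the uniform Artinian property (Proposition~\ref{s-loc}) being a $u$-$S(+)M$-Artinian ring coincides with being a $u$-$\{(1,0)\}$-Artinian ring, i.e., an Artinian ring. Combining the three translations with Proposition~\ref{trivial extension-usn} yields the corollary.

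I do not anticipate any real obstacle: the entire argument reduces to the elementary unit observation above together with definition-matching. The only subtlety worth flagging in the write-up is that $S(+)M$ is not singleton even when $S=\{1\}$, which is why one must invoke the saturation result rather than claim the translation directly.
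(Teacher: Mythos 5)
Your proposal is correct and takes the same route as the paper, which proves the corollary simply by specializing Proposition \ref{trivial extension-usn} to $S=\{1\}$ with no further argument; your additional observation that every $(1,m)\in S(+)M$ is a unit (so the $u$-$S(+)M$-Artinian condition collapses to ordinary Artinianness) is exactly the detail the paper leaves implicit. One small precision for the write-up: the operative fact is not that $(S(+)M)^{\ast}$ contains $(1,0)$ (every saturation contains the identity), but that $(S(+)M)^{\ast}=\{(1,0)\}^{\ast}$, both being the set of units of $R(+)M$, after which Proposition \ref{s-loc} applies on both sides — or, even more directly, that $sI_k\subseteq I_n$ with $s$ a unit forces $I_k\subseteq I_n$.
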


\begin{acknowledgement}\quad\\
	The second author was supported by National Natural Science Foundation of China (No. 12201361).
\end{acknowledgement}

\end{document}